\title{Approximation of Functionals by Neural Network without Curse of Dimensionality}
\author{Yahong Yang$^{a}$\thanks{E-mail address: \it\textbf{yyangct@connect.ust.hk}},
	Yang Xiang$^{a,b}$\thanks{E-mail address: \it\textbf{maxiang@ust.hk}}\\$^{a}$\small\textit{Department of Mathematics, Hong Kong University of Science and Technology,}\\ \small\textit{Clear Water Bay, Kowloon, Hong Kong}\\
	$^{b}$\small\textit{HKUST Shenzhen-Hong Kong
		Collaborative Innovation Research Institute,} \\ \small\textit{Futian, Shenzhen, China}}
\begin{document}
		\maketitle

	\begin{abstract}
		In this paper, we establish a neural network to approximate functionals, which are maps from infinite dimensional spaces to finite dimensional spaces.
		The approximation error of the neural network is $O(1/\sqrt{m})$ where $m$ is the size of networks, which overcomes the curse of dimensionality. The key idea of the approximation is to define a Barron space of functionals.
	\end{abstract}

	\section{Introduction}

	Recently, neural networks have  revolutionized many fields of science and engineering including computational and applied mathematics. As one of the  important applications of neural networks in applied mathematics, many methods have been developed for solving partial differential equations (PDEs) by neural networks, e.g., \cite{Lagaris1998,weinan2017deep,weinan2018deep,han2018solving,Grohs2018,Sirignano2018,Khoo2019,raissi2019physics,shin2020convergence,luo2020two,lu2021priori}.
	The boundary value problem of a PDE in a domain $\Omega$ in $d$-dimensional space takes the form
	\begin{equation}
		\begin{cases}\fL u=g_1 & \text { in } \Omega, \\ \fA u=g_2 & \text { on } \partial \Omega,\end{cases}\label{PDE}
	\end{equation}
	where $u$ is the unknown function, $\fL$ is a partial differential operator, $\fA$ is the operator for specifying an appropriate boundary condition, $g_1$ and $g_2$ are given functions, and without loss of generality, $\Omega=[0,1]^d$. The key idea of using neural networks to solve PDEs is to obtain $u(\vx;\vtheta)$ from a neural network, where $\vtheta$ denotes all the parameters in the neural network that are trained by optimizing some loss function associated with the PDE. That is, in these methods, a neural network is established for the solution function $u(\vx)$ with given function pair $g_1$ and $g_2$;
	for a different pair of functions $g_1, g_2$ in Eq.~\eqref{PDE}, the $u(\vx;\vtheta)$ from the neural network has to be learned again although operators $\fL$ and $\fA$ are the same.
	
	The PDE boundary value problem in Eq.~\eqref{PDE} can be considered  as an operator $u(\vx)=G(g_1,g_2)$. If we can learn the operator $G$ directly by a neural network, we will be able to obtain the solution of  Eq.~\eqref{PDE} for any given function pair $g_1, g_2$ without learning again. 
	A few methods have been proposed for learning operators by neural networks for solving PDEs, such as DeepONet \cite{lu2019deeponet}, DeepGreen \cite{gin2020deepgreen}, Fourier Neural Operator (FNO) \cite{li2020fourier}, Neural Operator \cite{li2020multipole,li2020neural}, MOD-Net \cite{zhang2021mod}, and the deep learning-based nonparametric estimation \cite{liu2022deep}.
	The DeepGreen, Neural Operator and MOD-Net methods are based on Green's functions for solving PDEs, i.e., these methods learn the Green's function instead of learning the operator directly.
	Since in general only solutions of linear PDEs have the Green's function formulation, those Green's function based methods cannot be used directly to solve nonlinear PDEs, and accuracy of some proposed attempts for the extension of those Green's function based methods to nonlinear PDEs has not been rigorously proved in the literature.
	The DeepONet \cite{lu2019deeponet} is a method that learns nonlinear operators associated with PDEs from data  based on the approximation theorem for operators by neural networks \cite{chen1995universal}. The method in \cite{liu2022deep} is to learn the  operator by model reduction \cite{bhattacharya2020model} of reducing the operator to a finite dimensional space.
	Most of these available works on learning operators focused on the development of algorithms.
	
	The curse of dimensionality is a serious issue that generally exists for approximations in high dimensions. Note that for the space of functions as the domain of the operator associated with PDE boundary value problem in Eq.~\eqref{PDE}, the dimension is $\infty$.
	The curse of dimensionality \cite{Bellman1957curse} summarizes this property that in order to maintain the accuracy of an approximation, the number of sample points grows exponentially with the increase of dimension. This means that for a fixed number of sample points, the accuracy will be lost in an exponential way as the dimension increases. Only a few analyses have been performed on overcoming the curse of dimensionality \cite{de2021convergence,lanthaler2022error,liu2022deep}, and they all focused on reducing the infinite dimensional space to a low dimensional space. Exponential dependence on the dimension for the sample points still exists in these methods, which requires that the dimension of the reduced space has to be low enough. Moreover, the Bayesian inversion learning method in \cite{de2021convergence} is only for linear operators.
	

	Convergence of the DeepONet method \cite{lu2019deeponet} for operator learning is  guaranteed by the approximation theorem for operators by neural networks \cite{chen1995universal} (Theorem \ref{operator} \cite{chen1995universal} in Appendix), which, however, does not provide the accuracy dependence on the number of sample points.
	\cite{lanthaler2022error} studied the error of the DeepONet method and overcame the curse of dimensionality in this method by considering  smooth functions with exponentially decaying coefficients in the Fourier series.
	Here we perform a general error analysis for the DeepONet method in terms of the size of networks and the number of parameters. It also serves for the examination of sources of the curse of dimensionality in this method.
	
	\underline{\bf Error estimate for DeepONet}.
	In DeepONet, for a continuous  operator $G$ defined in $W_1^\infty([0,1]^{d_1})\to W_1^\infty([0,1]^{d_2})$ and a function $v\in W^\infty_1([0,1]^{d_1})$, $G(v)$ is a function belonging to $W_1^\infty([0,1]^{d_2})$. This $G(v)$ can be approximated by a linear combination of the activation function $\sigma(x)=\max\{x,0\}$ in the following form by Theorem \ref{function} \cite{mhaskar1996neural} (given in Appendix):
	\begin{equation}
		\left\|G(v)(\vy)-\sum_{k=1}^{p} c_{k}\left[G(v)\right] \sigma\left(\vw_{k}\cdot \vy+\zeta_k\right)\right\|_{L_\infty([0,1]^{d_2})} \leq C p^{-1 / d_2}\|G(v)\|_{W^{\infty}_{1}([0,1]^{d_2})},\label{step1}
	\end{equation}
	where $\vw_k\in\sR^{d_2}, \zeta_{k}\in\sR$ for $k=1, \ldots, p$, $c_k$ is continuous functionals, and $C$ (or $C_1$, $C_2$ in other inequalities in this paper) is some constant independent of the parameters.
	Denote $f_k(v):=c_{k}\left[G(v)\right]$, which is a functional from $W_1^\infty([0,1]^{d_1})\to \sR$. The remaining task in DeepONet is to approximate these functionals by neural networks.

	In DeepONet, the domain of functions $[0,1]^{d_1}$ is divided into $s^{d_1}$ parts by the $N:=(s+1)^{d_1}$ nodes denote by $\{\vx_j\}_{j=1}^N$, where each
	$x_j\in \left\{0,\frac{1}{s},\frac{2}{s},\cdots\cdots,\frac{s-1}{s},1\right\}^{d_1}$.
	Based on these $N$ nodes, a piece-wise linear function of $v_N(\vx)$ is defined in DeepONet such that $\|v_N(\vx)-v(\vx)\|_{\infty}\le C_0s^{-1}$.
	Further assume that $f_k$ is a Lipschitz continuous functional in $L_\infty([0,1]^{d_1})$ with a Lipschitz constant $L_k$, then we have
	\begin{equation}
		\left|f_k(v)-f_k(v_N)\right|\le L_k\|v_N(\vx)-v(\vx)\|_{\infty}\le C_1s^{-1}\le C_1N^{-1/d_1}.\label{deepstep2.1}
	\end{equation} For each $f_k(v_N)$, a function $h_k:\sR^N\to \sR$ is defined such that
	\begin{equation}
		h_k(v(\vx_1),v(\vx_2),\cdots\cdots,v(\vx_N)):=f_k(v_N).\label{g}
	\end{equation}
	Suppose that $v$ is bounded by $1$ and $h_k\in W^\infty_1([-1,1]^N)$. By Theorem \ref{function} \cite{mhaskar1996neural}, we have \begin{align}
		\left|h_k(v(\vx_1),v(\vx_2),\cdots\cdots,v(\vx_N))-\sum_{i=1}^{m} c_{i} \sigma\left(\sum_{j=1}^{N} \xi_{i j} v\left(\vx_{j}\right)+\theta_{i}\right)\right| \leq C_2 m^{-\frac{1}{N} }\|h_k\|_{W^\infty_1([-1,1]^N)}.\label{deepstep2.2}
	\end{align}
	Combining Eqs.~(\ref{step1}),(\ref{deepstep2.1}) and (\ref{deepstep2.2}), with several further assumptions, the error of DeepONet is bounded by \begin{align}
		&\left\|G(v)(\vy)-\sum_{k=1}^{p} \sum_{i=1}^{m} c_{i}^{k} \sigma\left(\sum_{j=1}^{N} \xi_{i j}^{k} v\left(\vx_{j}\right)+\theta_{i}^{k}\right){\sigma\left(\vw_{k} \cdot \vy+\zeta_{k}\right)}\right\|_{L_\infty([0,1]^{d_2})}\notag\\\le& C p^{-1 / d_2}\|G(v)\|_{W^{\infty}_{1}([0,1]^{d_2})}+C_1(p,L)N^{-1/d_1}+C_2(p,L,\lambda) m^{-1/{N} },\label{error}
	\end{align}
	where $L=\max_{1\le k\le p}L_k$ and $\lambda=\max_{1\le k\le p}\|h_k\|_{W^\infty_1([-1,1]^N)}$.
	
	It can be seen that every term in Eq.~(\ref{error}) has the problem of the curse of dimensionality. The first term in Eq.~(\ref{error}) is the error for approximating the operators by some functionals, after which the learning of operators is reduced to learning of functionals.
	The last two terms in Eq.~(\ref{error}) are errors from the approximation of functionals by neural networks (the second one from the approximation of the function by a piecewise linear function and the last one from the approximation of this piecewise linear function by a neural network), in which
	the curse of dimensionality exists even if we consider such an approximation in  low dimensional spaces (i.e., small $d_1$ and $d_2$).
	In fact, for the second term $C_1(p,L)N^{-1/d_1}\leq\varepsilon$, where $\varepsilon$ is small, we should have at least $N\sim1/\varepsilon^{d_1}$,  and accordingly, the last term $C_2(p,L,\lambda) m^{-1/{N}}\sim m^{-\varepsilon^{d_1}}$, whose curse of dimensionality is more serious than that of $m^{-\frac{1}{d_1}}$.

	
	In this paper, we focus on the approximation of functionals, i.e.,  maps from a space of functions which has infinite dimensions to $\sR$, by neural networks without curse of dimensionality. It can be seen from the above analysis that approximating functionals by neural networks is a
	crucial step in approximating operators by neural networks in the DeepONet and similar methods, whose curse of dimensionality is serious in these methods. Moreover, functionals, such as linear functionals and energy functionals, have a wide range of  important applications in science and engineering fields.
	There are only limited attempts in the literature on using  neural networks to approximate functionals~\cite{chen1993approximations,thind2020deep}. In \cite{chen1993approximations}, they reduced the approximation of functionals to the approximation of  functions by reducing the function space into a finite dimensional space (Theorem \ref{functional} \cite{chen1993approximations}), which was adopted in the second step of the DeepONet method, and as we discussed above, this treatment has a serious curse of dimensionality. In \cite{thind2020deep}, they approximated functionals directly by neural networks but did not show the error of their method.
	
	Several methods for overcoming the curse of dimensionality in approximations of functions have been developed in the literature. An approximation method was proposed by Barron \cite{barron1993universal} based on some function space with spectral norm; see also further developments of this approach \cite{pisier1981remarques,bach2017breaking,klusowski2018approximation,lu2021priori}. Another type of function spaces based on the neural network representation and probability in the parameter space were also introduced and developed \cite{weinan2019priori,weinan2019barron,wojtowytsch2020some,luo2020two,li2020complexity,ma2022barron}. Following \cite{weinan2019barron,ma2022barron}, in this paper, the former type of spaces are referred to as Barron spectral spaces, and the latter Barron spaces.
			The approximation error in a Barron/Barron spectral space is able to reduce to $O(1/\sqrt{m})$, where $m$ is the size of networks.  However, the curse of dimensionality in approximation functionals cannot be solved by these Barron/Barron spectral space methods directly. In fact, the domain of functionals is an infinite dimensional space which is essentially different from the space of functions. A naive idea of generalizing the Barron/Barron spectral space methods for functions to functionals is to approximate the infinite dimensional space of functions by some finite dimensional space. However, as we demonstrated above for the  DeepONet method, the approximation of an infinite dimensional space by  a finite dimensional space through sample points still has the problem of curse of dimensionality (the second error in Eq.~\eqref{error}), even though the curse of dimensionality in the last step in the  DeepONet method (the last error in Eq.~\eqref{error}) in principle can be overcome by some form of the Barron/Barron spectral space method for functions without curse of dimensionality.
			Furthermore, in such a straightforward generalization, we only know that the finite dimensional function $h_k$ in Eq.~(\ref{g}) exists, and it is not easy to check if $h_k$ belongs to the Barron/Barron spectral space.
			
			In this paper, we establish a new method for the approximation of functionals by neural networks without curse of dimensionality, which is based on Fourier series of functionals and the associated Barron spectral space of functionals.
			Specifically,   we first establish Fourier series of functionals, and then  prove that any functional satisfying some proper assumption (Assumption \ref{assump}) can be approximated by these Fourier series (Theorem \ref{limit}). We then define a Barron spectral space $\fB_s$ and a Hilbert space $\fH_{s}$ of the functionals (Definition \ref{space}) based on the Fourier series, and estimate the error of the approximating neural network based on the Barron spectral space (Theorem \ref{main}). The approximation error of the neural network is $O(1/\sqrt{m})$ where $m$ is the size of networks, which overcomes the curse of dimensionality.
			Under some stronger conditions (including smoothness of the functions in the domain of the functional), a simpler method for learning functionals by neural networks has been proved (Theorem \ref{cutoff1}).
			Applications of these obtained theorems on the approximation of functionals by neural networks including the application to solving PDEs by neural networks are discussed.  
			
			
			Our main contributions are:
			
			$\bullet$ We establish a new method for the approximation of functionals by a neural network, by defining  (i) a Fourier-type series in the infinite-dimensional space of functionals and (ii) the associated spectral Barron spectral space $\fB_s$ and a Hilbert space $\fH_{s}$ of functionals. We show that the proposed  method for the approximation of functionals overcomes the curse of dimensionality.
			

			$\bullet$ The established method for approximation of functionals without curse of dimensionality can be employed in learning functionals, such as linear functionals and energy functionals in science and engineering fields. It can also be used to solve PDE problems by neural networks at  some given points. This method provides a basis for the further development of methods for learning operators.
			
			

			\section{Fourier Series and Barron Spectral Space of Functionals}
			In this section, we define a Barron spectral space of functionals. For this purpose,
			we first define a Fourier series in the infinite-dimensional space of functionals. These definitions associated with the space of functionals are based on a basis of the domain of the functionals (which is a function space).

			First of all, we define the index set of a basis of the space of functionals.
			\begin{defi}[Index set of basis of functionals] $\sK\subset\sZ^\infty$ is defined by
				\begin{flalign}
					\sK:=\bigcup\limits_{m=1}^\infty\sK^m,
					\label{sk}
				\end{flalign}
				where
				$\sK^m:=\big\{\vk:=(k_1,k_2,\cdots,k_i,\cdots)\in\sZ^\infty: N_{\vk}:=\max\{i:k_i\not=0\}\le m\big\}$.
			\end{defi}

			Due to the definition of $\sK$, we know that $\vk:=(k_1,k_2,\cdots,k_i,\cdots)\in\sK$ has only finite nonzero $k_i$. Furthermore, $\sK$ is a countable set due to Axiom of Choice; see Lemma \ref{Cardinality} below. Note that all the proofs in this paper including that of this lemma are given in Appendix.

			\begin{lem}\label{Cardinality}
				$\sK$ is a countable set.
			\end{lem}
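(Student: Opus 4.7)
The plan is to prove countability of $\sK$ by the standard two-step argument: first show that each $\sK^m$ is countable, then use the fact that a countable union of countable sets is countable.

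For the first step, I would observe that every $\vk\in\sK^m$ has the form $(k_1,\ldots,k_m,0,0,\ldots)$, since $N_{\vk}\le m$ forces $k_i=0$ for all $i>m$. This gives an explicit bijection
\[
\Phi_m:\sK^m\longrightarrow \sZ^m,\qquad (k_1,k_2,\ldots)\mapsto (k_1,\ldots,k_m).
\]
Since $\sZ$ is countable, and a finite Cartesian product of countable sets is countable (by an easy induction using the standard diagonal enumeration of $\sZ\times\sZ$), $\sZ^m$ is countable, hence so is $\sK^m$.

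For the second step, I would invoke the classical result that a countable union of countable sets is countable; this is where the Axiom of Choice is used, as it is needed to simultaneously pick an enumeration of each $\sK^m$. Given enumerations $\sK^m=\{\vk^{(m,1)},\vk^{(m,2)},\ldots\}$, a surjection $\sN\times\sN\to\sK$ is defined by $(m,j)\mapsto\vk^{(m,j)}$, and composing with a bijection $\sN\to\sN\times\sN$ gives a surjection $\sN\to\sK$, proving countability.

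I do not expect any serious obstacle; the only thing to be mildly careful about is verifying that the chain $\sK^1\subset\sK^2\subset\cdots$ exhausts $\sK$, which is immediate from the definition $\sK=\bigcup_{m=1}^\infty\sK^m$, and noting that the definition of $\sK^m$ forces only finitely many nonzero coordinates so that $\Phi_m$ is well defined. The proof is essentially a transcription of the two elementary set-theoretic facts above.
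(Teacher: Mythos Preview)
Your proposal is correct and follows essentially the same approach as the paper: both show each $\sK^m$ is countable via its identification with $\sZ^m$ (the paper writes $|\sN^m|=\aleph_0$), and then appeal to the fact that a countable union of countable sets is countable, explicitly noting the role of the Axiom of Choice. Your write-up merely spells out the bijection and the enumeration more explicitly than the paper does.
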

		
		\begin{proof}[Proof of Lemma \ref{Cardinality}]
			Denote the cardinality of a set $\sA$ as $|\sA|$ and $|\sN|=\aleph_0$. Hence \begin{equation}
				|\sK^m|=|\{\vk:=(k_1,k_2,\cdots,k_i,\cdots)\in\sN^\infty: N_{\vk}:=\max\{i:k_i\not=0\}\le m\}|=|\sN^m|=\aleph_0.
			\end{equation}
			
			Since a countable union of countable sets is countable (by using the Axiom of Choice), we have
			\begin{equation}
				|\sK|=\left|\bigcup\limits_{m=1}^\infty\sK^m\right|=\aleph_0.
			\end{equation}
		\end{proof}
			
			
			Denote the domain of functionals as $\Omega$, which is a Banach space of functions. Let $\{\Phi_i\}_{i=1}^\infty$ be a Schauder basis of $\Omega$, i.e., for every element $v\in\Omega$, there exists a unique sequence $\{b_i\}_{i=1}^\infty$ of scalars in $\sR$ such that $v=\sum_{i=1}^\infty b_i\Phi_i$. For example,  $\{\Phi_n\}_{n=1}^\infty=\{\sqrt{2} \sin (2 \pi n x) \mid n \in \mathbb{N}\} \cup\{\sqrt{2} \cos (2 \pi n x) \mid n \in \mathbb{N}\} \cup\{1\}$  when $\Omega=L_2(0,1)$.
			Note that we will define a Barron spectral space of functionals based  only on the sequence $\{b_i\}_{i=1}^\infty$, and the defined Barron spectral space of functionals will apply to any basis  $\{\Phi_i\}_{i=1}^\infty$ and any convergence of $v=\sum_{i=1}^\infty b_i\Phi_i$ as long as  $\{b_i\}_{i=1}^\infty$   is uniquely determined.
			Next, we define  a basis of the functionals on $\Omega$.
			\begin{defi}[Fourier basis of functionals]\label{fourier}For any $v=\sum_{i=1}^\infty b_i\Phi_i\in L_\text{bound}(\Omega)$, where \begin{equation}
					L_\text{bound}(\Omega):=\left\{v=\sum_{i=1}^\infty b_i\Phi_i\in\Omega: -\frac{1}{2}< b_i< \frac{1}{2}, i\in\sN_+\right\},
				\end{equation}
				the Fourier basis $\{e_{\vk}\}_{\vk\in\sK}$ of the functionals with domain $\Omega$ based on $\{\Phi_i\}_{i=1}^{\infty}$ is \begin{equation}
					e_{\vk}(v):=\prod_{i=1}^\infty\exp \left(2\pi \I  k_ib_i\right)=\exp \left(\sum_{i=1}^\infty 2\pi \I  k_ib_i\right).
				\end{equation}
			\end{defi}
			(The condition $-\frac{1}{2}< b_i< \frac{1}{2}$ is to restrict the area of the domain $\left(-\frac{1}{2},\frac{1}{2}\right)^{|\sD_j|}$ 
			to $1$.)
			
			We make the following assumptions for the functionals being considered:
			\begin{assump}[Finite dimensional summation]\label{assump}	
				For a basis of $\Omega$, $\{\Phi_i\}_{i=1}^\infty$, there is a unique sequence of set $\sD:=\{\sD_j\}_{j=1}^\infty$ where $\sD_j\subset \sN_+$ and $|\sD_j|<+\infty$, such that the functional $f$ satisfies
				\begin{align}
					f\left(\sum_{i=1}^\infty b_i\Phi_i\right)=\sum_{j=1}^\infty f_j\left(\sum_{i\in\sD_j} b_{i}\Phi_{i}\right),\label{divide}
				\end{align} for $v=\sum_{i=1}^\infty b_i\Phi_i\in L_\text{bound}(\Omega)$.
				For each $f_j\left(\sum_{i\in\sD_j} b_{i}\Phi_{i}\right)$, it cannot be further divided into $f_j^{(1)}\left(\sum_{i\in\sD_j^1} b_{i}\Phi_{i}\right)$ and $f_j^{(2)}\left(\sum_{i\in\sD_j^2} b_{i}\Phi_{i}\right)$ where $|\sD_j^1|,|\sD_j^2|< |\sD_j|$.
			\end{assump}

			 Note that Assumption \ref{assump} does not require $\sD_j\cap\sD_i=\emptyset$ for $i\not=j$. We will give examples of functionals that satisfy Assumption \ref{assump} and demonstrate its generality at the end of this section.

			Due to  Assumption \ref{assump}, the infinite-dimensional space of functions as the domain of the functional can be decomposed into finite-dimensional subspaces. As a result, in each finite-dimensional subspace, Fourier coefficients can be defined. 	Under this assumption, we can define the Fourier coefficients of a functional as follows.
			
			\begin{defi}[Fourier coefficients of a functional]\label{fourier coefficient}
				Suppose that Assumption \ref{assump} holds for the functional $f$, the Fourier coefficients of the functional $f$ with basis $\{e_{\vk}\}_{\vk\in\sK}$ are define as
				\begin{equation}				a_{\vk}(f):=\sum_{j\in\sA_{\vk}}\int_{\left(-\frac{1}{2},\frac{1}{2}\right)^{|\sD_j|}}f_j\left(\sum_{i\in\sD_j} b_{i}\Phi_{i}\right)\left[\prod_{i\in\sD_j}\exp \left(-2\pi \I  k_ib_i\right)\,\D b_i\right],\label{coefficient}
				\end{equation}where
				\begin{align}\label{eqn:Ak}
					\sA_{\vk}:=\{j\in\sN:\vk\in\sK_{\sD_j}\}, \ \ \
					\sK_{\sD_j}:=\{\vk:=(k_1,k_2,\cdots,k_i,\cdots)\in\sK:k_i=0 \ {\rm for} \ i\notin\sD_j\}.
				\end{align}
			\end{defi}
			Note that the summation over $\sA_{\vk}$ is due to the fact that each $\vk$ may be associated with multiple $D_j$'s.


			By Definition \ref{fourier coefficient}, we have the Fourier series of the functionals.   We prove that  functionals can be expanded into such Fourier series in the following theorem.
			
			\begin{thm}\label{limit}
				For  a functional $f$ that is defined on $L_\text{bound}(\Omega)$ and satisfies Assumption \ref{assump} and following two assumptions:
				
				\textbf{(i)} (Smoothness ):For any $j\in\sN_+$, $f_j\left(\sum_{i\in\sD_j} b_{i}\Phi_{i}\right)$ is a $C^1$-function respect to $b_i$ for $i\in\sD_j$ in $\left(-\frac{1}{2},\frac{1}{2}\right)^{|\sD_j|}$.
				
				\textbf{(ii)} (Existence of $a_{\vk}(f)$): For each $\vk\in\sK$, $a_{\vk}(f)$ exists
				and the following condition holds
				\begin{equation}
					\sum_{\vk \in \sK}\sum_{j\in\sA_{\vk}}\left|\int_{\left(-\frac{1}{2},\frac{1}{2}\right)^{|\sD_j|}}f_j\left(\sum_{i\in\sD_j} b_{i}\Phi_{i}\right)\left[\prod_{i\in\sD_j}\exp \left(-2\pi \I  k_ib_i\right)\,\D b_i\right]\right|<\infty,\label{ab sum}
				\end{equation}
				we have
				\begin{equation}\label{eqn:f-expansion}
					f(v)=\sum_{\vk\in\sK}a_{\vk}(f)e_{\vk}(v),~v\in L_\text{bound}(\Omega).
				\end{equation}
				The series in Eq.~(\ref{eqn:f-expansion}) is unconditionally convergent.			We call this expansion the Fourier series of the functional $f$.
			\end{thm}
		
		\begin{proof}[Proof of Theorem \ref{limit}]
			For a functional $f$ satisfying Assumption \ref{assump}, it can be written as
			\begin{align}
				f\left(\sum_{i=1}^\infty b_i\Phi_i\right)=\sum_{j=1}^\infty f_j\left(\sum_{i\in\sD_j} b_{i}\Phi_{i}\right),
			\end{align}
			where $|\sD_j|<\infty$. Furthermore, due to assumption \textbf{(i)} \ref{assump}, each $f_j$ is a $C^1$-function with respect to $b_i\in\sD_j$. Therefore, the Fourier series of $f_j$ converge to $f_j$, i.e.,
			\begin{equation}
				f_j\left(\sum_{i\in\sD_j} b_{i}\Phi_{i}\right)=\sum_{\vk\in\sK_{\sD_j}}(\widehat{f_j})_{\vk}\prod_{i\in\sD_j}\exp \left(2\pi \I  k_ib_i\right)=\sum_{\vk\in\sK_{\sD_j}}(\widehat{f_j})_{\vk}\prod_{i=1}^\infty\exp \left(2\pi \I  k_ib_i\right),
			\end{equation}
			where $\vk:=(k_1,k_2,\cdots,k_i,\cdots)$ and $(\widehat{f_j})_{\vk}$ is the Fourier coefficient of $f_j$ for the basis $\prod_{i\in\sD_j}\exp \left(2\pi \I  k_ib_i\right)$. By  the definition of $a_{\vk}(f)$ and direct calculation, we have
			\begin{equation}	a_{\vk}(f)=\sum_{j\in\sA_{\vk}}(\widehat{f_j})_{\vk}=\sum_{j\in\sA_{\vk}}\int_{\left(-\frac{1}{2},\frac{1}{2}\right)^{|\sD_j|}}f_j\left(\sum_{i\in\sD_j} b_{i}\Phi_{i}\right)\left[\prod_{i\in\sD_j}\exp \left(-2\pi \I  k_ib_i\right)\,\D b_i\right].
			\end{equation}
			Therefore
			\begin{align}
				f\left(\sum_{i=1}^\infty b_i\Phi_i\right)&=\sum_{j=1}^\infty f_j\left(\sum_{i\in\sD_j} b_{i}\Phi_{i}\right)\notag\\&=\sum_{j=1}^\infty \sum_{\vk\in\sK_{\sD_j}}(\widehat{f_j})_{\vk}\prod_{i=1}^\infty\exp \left(2\pi \I  k_ib_i\right)\notag\\&=\sum_{\vk\in\sK} \sum_{j\in\sA_{\vk}}(\widehat{f_j})_{\vk}\prod_{i=1}^\infty\exp \left(2\pi \I  k_ib_i\right)\notag\\&=\sum_{\vk\in\sK}a_{\vk}(f)\prod_{i=1}^\infty\exp \left(2\pi \I  k_ib_i\right).\label{change}
			\end{align}
			The third equation in \eqref{change} is obtained by change of order of the summations, which is due to assumption \textbf{(ii)}.
		\end{proof}
			We will give examples of the Fourier expansions of functionals at the end of the section.

		Now we define the Barron spectral space and a Hilbert space of functionals based on the Fourier series of functionals:
		
		\begin{defi}\label{space}			
			\textbf{(i)} The Barron spectral space of the continuous functionals on $L_\text{bound}(\Omega)$ that satisfy Assumption \ref{assump}  is
			\begin{equation}
				\fB_{s}[L_\text{bound}(\Omega)]:=\left\{f:f(v)=\sum_{\vk\in \sK}a_{\vk}(f)e_{\vk}(v), \ \sum_{\vk \in \sK}(1+(2\pi)^s|\vk|^s_1)|a_{\vk}(f)|<\infty\right\},\label{barron}
			\end{equation} for $s\ge 0$ with the norm
			\begin{equation}
				\|f\|_{\fB_{s}}:=\sum_{\vk \in \sK}(1+(2\pi)^s|\vk|^s_1)|a_{\vk}(f)|.
			\end{equation}
			
			\textbf{(ii)} A Hilbert space of functionals on $L_\text{bound}(\Omega)$ that satisfy Assumption \ref{assump} is
			\begin{equation}
				\fH_{s}[L_\text{bound}(\Omega)]:=\left\{f:f(v)=\sum_{\vk\in \sK}a_{\vk}(f)e_{\vk}(v), \sum_{\vk \in \sK}(1+(2\pi)^{2s}|\vk|^{2s}_1)|a_{\vk}(f)|^2<\infty\right\}\label{Hilbert}
			\end{equation}
			for $s\ge 0$ with the inner product \begin{equation}
				\left \langle f,g \right \rangle_{\fH_{s}}=\sum_{\vk \in \sK}(1+(2\pi)^{2s}|\vk|^{2s}_1)\overline{a_{\vk}(f)}\cdot a_{\vk}(g)
			\end{equation} where $\overline{a_{\vk}(f)}$ is the complex conjugate of ${a_{\vk}(f)}$.
		\end{defi}
		In this paper, we focus on the  Barron spectral space  $\fB_2[L_\text{bound}(\Omega)]$ and the Hilbert space $\fH_{1}[L_\text{bound}(\Omega)]$, and for simplicity of notations,  denote them as $\fB[L_\text{bound}(\Omega)]$  and $\fH[L_\text{bound}(\Omega)]$, respectively. Note that it is easy to check that $\fH_{s}[L_\text{bound}(\Omega)]$ defined above is a Hilbert space.

		Many widely used functionals in science and engineering satisfy  Assumption \ref{assump} and the defined Fourier expansion applies. Here we give some examples.
		\begin{exam}\label{exam1}
			For a linear functional $f$, $f\left(\sum_{i=1}^\infty b_i\Phi_i\right)=\sum_{i=1}^\infty f\left( b_{i}\Phi_{i}\right)$. In this case, Assumption \ref{assump} is satisfied with
			$\sD=\{\sD_j\}_{j=1}^{\infty}=\{\{1\},\{2\},\cdots,\{j\},\cdots\}$. For any $\vk\in \sK_{\sD_j}$, there is only one component of $\vk$, i.e., $k_j$, is nonzero, and for any $v=\sum_{i=1}^\infty b_i\Phi_i$, we have
			\begin{flalign*}
				e_{\vk}\left(v\right)=&\exp \left(2\pi \I  k_jb_j\right), \ \ \ a_{\vzero}(f):=\sum_{i=1}^\infty\int_{\left(-\frac{1}{2},\frac{1}{2}\right)}f\left( b_{i}\Phi_{i}\right)\,\D b_i,
				\notag\\
				a_{\vk}(f)=&\int_{\left(-\frac{1}{2},\frac{1}{2}\right)}f\left( b_{j}\Phi_{j}\right)\exp \left(-2\pi \I  k_jb_j\right)\,\D b_j, \ \ \vk\neq\vzero.
			\end{flalign*}
			Therefore, the Fourier expansion \eqref{eqn:f-expansion} of the linear functional $f$ is
			\begin{align}			
				f\left(\sum_{i=1}^\infty b_i\Phi_i\right)=&\sum_{i=1}^\infty\sum_{k_i\in\sZ}\left(\int_{\left(-\frac{1}{2},\frac{1}{2}\right)}f\left( b_{i}\Phi_{i}\right)\exp \left(-2\pi \I  k_ib_i\right)\,\D b_i\right)\exp \left(2\pi \I  k_ib_i\right).
			\end{align}
		The functional $f$ is in the  Barron spectral space	if it further satisfies the condition in Definition~\ref{space}(i).

		Note that linear functionals play important roles in both theoretical studies and applications in science and engineering, e.g., integration, inner product with a given function, solution of initial/boundary value problem of linear PDEs at some given points, etc. This important class of functionals always satisfy Assumption \ref{assump}.
	\end{exam}
	
	\begin{exam}\label{exam2}	
		Consider the energy functional $E(v)=\int_{(0,1)^d}\frac{1}{2}\alpha\left|\nabla v\right|^2\,\D x$ where $\alpha>0$ and $d\in\sN$. Here $v\in\Omega=H^1(0,1)^d$ with $\frac{\partial v}{\partial \nu}=0$ for $x\in\partial [0,1]^d$ (where $\frac{\partial }{\partial \nu}$ is the outer normal derivative on the boundary), and an orthogonal basis of $\Omega$ is 
		$\{\Phi_p(x)=\prod_{j=1}^d\sqrt{2}\cos(\pi p_j x_j)\}_{p\in\sN^d\backslash\{0\}}\cup\{\Phi_0=1\}$,
		where $p_j$ and $x_j$ are components of $p$ and $x$, respectively.
		For any $v=\sum_{p\in\sN^d} b_p\Phi_p$, the energy can be written as
		$$E\left(v\right)=\int_{(0,1)^d}\frac{1}{2}\alpha\left|\sum_{p\in\sN^d} \big(\pi b_p\Phi_p)p\right|^2\D x
		=\sum_{p\in\sN^d} \frac{1}{2}\pi^2\alpha |p|^2b_p^2
		=\sum_{p\in\sN^d} E\big(b_p\Phi_p\big).$$
		Here $E\big(b_p\Phi_p\big)=\frac{1}{2}\pi^2 \alpha |p|^2 b_p^2$.
		In this case, we also have that Assumption \ref{assump} is satisfied with $\sD=\{\sD_p\}_{p\in\sN^d}$, where $\sD_p=\{p\}$ for $p\in\sN^d$.
		We further restrict the domain to
		$$\left\{v(x)=\sum_{p\in\sN^d} b_p\Phi_p: -B_p< b_p<B_p,\ B_p=C|p|^{-d/2-1-\varepsilon}, \  p\in\sN^d\right\}$$
		where $C,\varepsilon>0$, so that the energy is well-defined.
		For any $\vk\in \sK_{\sD_p}$, there is only one component of $\vk$, denoted by $k_p$, is nonzero,    and we have
		\begin{flalign*}
			&e_{\vk}(v)=\frac{1}{(2B_p)^{\frac{1}{2}}}\exp\left(\frac{\pi}{B_p}\I k_p b_p\right), \ \ a_{\vzero}(E)=\sum_{p\in\sZ^d}\frac{1}{(2B_p)^{\frac{1}{2}}}\int_{-B_p}^{B_p}E\big(b_p\Phi_p\big)\,\D b_p,
			\notag\\ 				
			&a_{\vk}(E)=\frac{1}{(2B_p)^{\frac{1}{2}}}\int_{-B_n}^{B_n}E\big(b_p\Phi_p\big)\exp\left(-\frac{\pi}{B_p}\I k_p b_p\right)\,\D b_p.\notag\\
		\end{flalign*}
		The Fourier expansion \eqref{eqn:f-expansion}  of this energy functional $E$ is
		\begin{align}			
			E=&\sum_{p\in\sN^d}\sum_{k_p\in\sZ}\left(\frac{1}{2B_p}\int_{-B_p}^{B_p}E\big(b_p\Phi_p\big)\exp\left(-\frac{\pi}{B_p}\I k_p b_p\right)\,\D b_p\right)\exp\left(\frac{\pi}{B_p}\I k_p b_p\right).
		\end{align}
The functional $E$ is in the  Barron spectral space	if it further satisfies the condition in Definition~\ref{space}(i).

Note that our method also applies to many other similar nonlinear functionals such as the elastic energy of a deformed materials $E=\frac{1}{2}\sum_{ijkl}C_{ijkl}\varepsilon_{ij}\varepsilon_{kl}$, where $\{C_{ijkl}\}$ is the elastic constant tensor, $\varepsilon_{ij}=\frac{1}{2}(\frac{\partial u_j}{\partial x_i}+\frac{\partial u_i}{\partial x_j})$ is the strain tensor,  and $u_i$ is a component of the displacement vector.
\end{exam}

We further consider an example in which the functions are defined in a domain of discrete points. In Physics, chemistry, and materials science, atomistic models are commonly used as a tool to study the materials properties~\cite{MD}. Inter-atomic potentials are used in atomistic models, and many of them are pairwise potentials with finite interaction range. The total energy of an atomistic system depends on the positions of atoms. The number of atomics is typically very large in the atomistic simulations.
\begin{exam}\label{exam}
Consider an example of the total energy of a one dimensional atomistic system:\\
$E=\sum_{i\in\sN}\big[V(a+u_{i+1}-u_i)-V(a)\big]$,
where $V(r)$ is a two-body potential such as the Lennard-Jones potential ($V(r)=4\varepsilon\left[(\sigma/r)^{12}-(\sigma/r)^6\right]$ with $\varepsilon$ and $\sigma$ being two parameters), where $r$ is the distance between two atoms,  $a$ is a the lattice constant, and $u_i$ the displacement of the $i$-th atom. Since the number of atoms is very large, we simply write the number as infinity. Note that the convergence of the summation is not a problem in a real system because the number of atoms is always finite; or we can add some decaying condition on $\{u_i\}$ as in Example~\ref{exam2}.	

In this case,  Assumption \ref{assump} is satisfied with $\sD=\{\{1,2\},\{2,3\},\cdots,\{n,n+1\},\cdots\}$. Denote
$u=(u_1,u_2,\cdots,u_n,\cdots)$.
The Fourier expansion \eqref{eqn:f-expansion} of this energy functional $E$  is $\sum_{\vk\in\sK}a_{\vk}(E)e_{\vk}(u)$, which is
\begin{flalign}
E=&\sum_{i\in\sN}\sum_{(k_i,k_{i+1})\in\sZ^2}\exp \left(2\pi \I  k_iu_i+2\pi\I k_{i+1}u_{i+1}\right)\notag\\&\cdot \int_{\left(-\frac{1}{2},\frac{1}{2}\right)^{2}}\big[V(a+u_{j+1}-u_j)-V(a)\big]\exp(2\pi \I  k_iu_i+2\pi \I  k_{i+1}u_{i+1})\,\D u_i\,\D u_{i+1}.\label{eqn:atom}
\end{flalign} 

The functional $E$ is in the  Barron spectral space	if it further satisfies the condition in Definition~\ref{space}(i), where the Fourier coefficient $a_{\vk}(E)$ is the integral expression in Eq.~\eqref{eqn:atom}.
Note that we have similar Fourier expansion for the total energy of an atomic system with a general atomic interaction of finite range in any dimension.
\end{exam}

\begin{exam}
	Consider the following infinite dimensional nonlinear functional $f^{(1)}$ satisfying Assumption \ref{assump}:
	
	\begin{equation}
		f^{(1)}\left(\sum_{i=1}^\infty b_i\Phi_i\right)=\sum_{i=1}^\infty s_ib_i^3,
	\end{equation}
	where $s_i\in\sR$. Here $f^{(1)}$ is  well-defined in $L_\text{bound}(\Omega)$ when $\sum_{i=1}^\infty{|s_i|}<\infty$. In this case, $\sD=\{\{1\},\{2\},\cdots,\{n\},\cdots\}$.
	Hence we obtain that\begin{align}
		\sum_{\vk\in \sK}a_{\vk}(f^{(1)})e_{\vk}(v)=\sum_{i=1}^\infty\left(\sum_{\vk\in\sK_i}a_{\vk}(f^{(1)})e_{\vk}(v)\right)=\sum_{i=1}^\infty\left(s_i\sum_{k\in\sZ}d_k\exp(2\pi \I  kb_i)\right)
	\end{align}
	where \begin{align}
		\sK_i&:=\{\vk\in\sK:k_j=0 \ {\rm for}\ j\not=i, \ {\rm and}\ k_j\not=0 \ {\rm for} j=i\}\notag\\d_k&:=\int_{\left(-\frac{1}{2},\frac{1}{2}\right)}b_1^3\exp \left(-2\pi \I  kb_1\right)\,\D b_1.\notag
	\end{align}
	
	When $b_i\in\big(-\frac{1}{2},\frac{1}{2}\big)$, $\sum_{k\in\sZ}d_k\exp(2\pi \I  kb_i)$ is the Fourier series of $b^3_i$. Therefore, when $\sum_{i=1}^\infty{|s_i|}<\infty$, \begin{equation}
		\sum_{\vk\in \sK}a_{\vk}(f^{(1)})e_{\vk}(v)=\sum_{i=1}^\infty\left(s_i\sum_{k\in\sZ}d_k\exp(2\pi \I  kb_i)\right)=\sum_{i=1}^\infty\left(s_ib_i^3\right)=f^{(1)}\left(\sum_{i=1}^\infty b_i\Phi_i\right).
	\end{equation}
	Furthermore, $f^{(1)}\in\fB[L_\text{bound}(\Omega)]$ $\left(f^{(1)}\in\fH[L_\text{bound}(\Omega)]\right)$ when $\sum_{k\in\sZ}(1+(2\pi)^2k^{2})|d_k|<\infty$ $\left(\sum_{k\in\sN_+}(1+(2\pi)^2k^{2})|d_k|^2<\infty, \ \sum_{i\in\sN_+}|s_i|^2<\infty\right)$.
\end{exam}
\begin{exam}\label{exam6}
	Consider another example:
	\begin{equation}
		f^{(2)}\left(\sum_{i=1}^\infty b_i\Phi_i\right)=\sum_{i=1}^\infty s_ib_ib_{i+1}.
	\end{equation}Here $f^{(2)}$ is  well-defined in $L_\text{bound}(\Omega)$ when $\sum_{i=1}^\infty{|s_i|}<\infty$. In this case, \[\sD=\{\{1,2\},\{2,3\},\cdots,\{n,n+1\},\cdots\},\] and
	\begin{align}
		a_{\vk}(f^{(2)})=\int_{\left(-\frac{1}{2},\frac{1}{2}\right)^{2}}s_ib_ib_{i+1}\exp(2\pi \I  k_ib_i)\,\D b_i\,\D b_{i+1}+\int_{\left(-\frac{1}{2},\frac{1}{2}\right)^{2}}s_{i-1}b_ib_{i-1}\exp(2\pi \I  k_ib_i)\,\D b_i\,\D b_{i-1},
	\end{align} when $\vk\in\sK_i:=\{\vk\in\sK:k_j=0 \ {\rm for} \ j\not=i, \ {\rm and}\ k_j\not=0 \ {\rm for}\ j=i\}$, and $s_0=b_0=0$. It can be calculated that \begin{align}
		a_{\vk}(f^{(2)})=\int_{\left(-\frac{1}{2},\frac{1}{2}\right)^{2}}s_ib_ib_{i+1}\exp(2\pi \I  k_ib_i+2\pi \I  k_{i+1}b_{i+1})\,\D b_i\,\D b_{i+1}
	\end{align} when $\vk\in\bar{\sK}_i:=\{\vk\in\sK:k_j=0\ {\rm for}\ j\not=i,i+1, \ {\rm and}\ k_j\not=0 \ {\rm for}\ j=i,i+1\}$. Otherwise $a_{\vk}(f^{(2)})=0$. Therefore,\begin{equation}
		\sum_{\vk\in \sK}a_{\vk}(f^{(2)})e_{\vk}(v)=\sum_{i=1}^\infty\left(\sum_{\vk\in\sK_i\cup\sK_{i+1}\cup\bar{\sK}_{i}}a_{\vk}(f^{(2)})\exp(2\pi \I k_ib_i+2\pi \I  k_{i+1}b_{i+1})\right)=\sum_{i=1}^\infty\left(s_ib_ib_{i+1}\right).
	\end{equation}
\end{exam}
In the above two examples, there is a sequence $\{s_i\}_{i=1}^\infty$ to make the functionals well-defined in $L_\text{bound}(\Omega)$ (i.e., the series converges).
Note that alternatively, convergence of the series can also be achieved by restricting the domain of functionals. For example, if we want to approximate functionals such as the $L_2$-norm: $f(v)=\|v\|_{L_2(0,1)}$, we can restrict the domain of the functionals to $\left\{v=\sum_{i=1}^\infty b_i\Phi_i: -Ci^{-1/2-\varepsilon}< b_i< Ci^{-1/2-\varepsilon}, i\in\sN_+\right\}$ where $C,\varepsilon>0$, and redefine $a_{\vk}(f)$ as that in Eq.~\eqref{revisit1}.

\begin{exam}\label{sin}
	Consider the functional $f(v)=\int_0^1 v^3\,\D x $ and an orthogonal basis of functions on $(0,1)$:
	$\{\Phi_j(x)=\sqrt{2}\cos(\pi j x)\}_{j\in\sN\backslash\{0\}}\cup\{\Phi_0=1\}$. For any $v=\sum_{j\in\sN} b_j\Phi_j$, the functional $f$ can be written as\begin{align}
		f\left(\sum_{j\in\sN} b_j\Phi_j\right)=&\sum_{j=0}^\infty\int_0^1 b_j^3\Phi_j^3(x)\,\D x+3\sum_{j_1\not=j_2}\int_0^1 b_{j_1}^2b_{j_2}\Phi_{j_1}^2(x)\Phi_{j_2}(x)\,\D x\notag\\&+6\sum_{\text{$j_1,j_2,j_3$ different}}\int_0^1 b_{j_1}b_{j_2}b_{j_3}\Phi_{j_1}(x)\Phi_{j_2}(x)\Phi_{j_3}(x)\,\D x.\label{form1}
	\end{align}

%
%

Using the definition of $\{\Phi_j(x) \}$, it can be calculated that
\begin{align}
	f\left(\sum_{j\in\sN} b_j\Phi_j\right)=b_0^3+3\sum_{j_1=1}^\infty b_{j_1}^2b_{0}+\frac{3\sqrt{2}}{2}\sum_{2j_1=j_2\atop j_1,j_2>0} b_{j_1}^2b_{j_2}+3\sqrt{2}\sum_{\text{$j_1,j_2,j_3$  different}\atop j_1+j_2=j_3} b_{j_1}b_{j_2}b_{j_3}.
\end{align}

Note that the convergence of the summation is not a problem since we can add some decaying condition on $\{b_j\}$ as in Example~\ref{exam2}:\[\left\{v(x)=\sum_{j\in\sN} b_j\Phi_j: -C_p< b_p<C_p,\ C_p=C|j|^{-1-\varepsilon}, \  j\in\sN\right\}\]	where $C,\varepsilon>0$. In this case, we also have that Assumption \ref{assump} is satisfied with $\sD=\{\{0\},\{0,j_1\}_{j_1=1}^\infty,\{j_1,j_2\}_{2j_1=j_2\atop j_1,j_2>0},\{j_1,j_2,j_3\}_{\text{$j_1,j_2,j_3$ different},\atop j_1+j_2=j_3}\}$. We also can write down the Fourier expansion of $f(v)$. We put the detail in the end of this example.

Now we present the Fourier coefficients of this functional $f(v)$ in the Fourier expansion \eqref{eqn:f-expansion}. 
For any $\vk\in \sK$ with only one nonzero component of $\vk$, denoted by $k_{s_1}$, and we have when $s_1=0$, \begin{align}
	e_{\vk}(v)=&\frac{1}{(2C_0)^{\frac{1}{2}}}\exp\left(\frac{\pi}{C_0}\I k_0 b_0\right),\notag\\a_{\vk}(f)=&\frac{1}{(2C_0)^{\frac{1}{2}}}\int_{-C_0}^{C_0} b_0^3e_{\vk}\,\D b_0+3\sum_{j_1=1}^\infty\frac{1}{(2C_0)^{\frac{1}{2}}2C_{j_1}}\int_{-C_{j_1}}^{C_{j_1}}\int_{-C_0}^{C_0} b_0b_{j_1}^2e_{\vk}\,\D b_0\,\D b_{j_1};
\end{align} for an odd integer $s_1$, \begin{align}
	e_{\vk}(v)=&\frac{1}{(2C_{s_1})^{\frac{1}{2}}}\exp\left(\frac{\pi}{C_{s_1}}\I k_{s_1} b_{s_1}\right),\notag\\a_{\vk}(f)=&\frac{3}{(2C_{s_1})^{\frac{1}{2}}2C_0}\int_{-C_{0}}^{C_{0}}\int_{-C_{s_1}}^{C_{s_1}} b_0b_{s_1}^2e_{\vk}\,\D b_{s_1}\,\D b_0+\frac{3\sqrt{2}}{2}\frac{1}{(2C_{s_1})^{\frac{1}{2}}2C_{2s_1}}\int_{-C_{s_1}}^{C_{s_1}}\int_{-C_{2s_1}}^{C_{2s_1}} b_{2s_1}b_{s_1}^2e_{\vk}\,\D b_{2s_1}\,\D b_{s_1}\notag\\&+3\sqrt{2}\sum_{\text{$s_1,j_1,j_2$ different}\atop s_1+j_1=j_2\text{ or }j_1+j_2=s_1}\frac{1}{(2C_{s_1})^{\frac{1}{2}}4C_{j_1}C_{j_2}}\int_{-C_{j_2}}^{C_{j_2}}\int_{-C_{j_1}}^{C_{j_1}}\int_{-C_{s_1}}^{C_{s_1}} b_{j_2}b_{j_1}b_{s_1}e_{\vk}\,\D b_{s_1}\,\D b_{j_1}\,\D b_{j_2};
\end{align} and for an even integer $s_1\not=0$, \begin{align}
	e_{\vk}(v)=&\frac{1}{(2C_{s_1})^{\frac{1}{2}}}\exp\left(\frac{\pi}{C_{s_1}}\I k_{s_1} b_{s_1}\right),\notag\\a_{\vk}(f)=&\frac{3}{(2C_{s_1})^{\frac{1}{2}}2C_0}\int_{-C_{0}}^{C_{0}}\int_{-C_{s_1}}^{C_{s_1}} b_0b_{s_1}^2e_{\vk}\,\D b_{s_1}\,\D b_0\notag\\&+\frac{3\sqrt{2}}{2}\frac{1}{(2C_{s_1})^{\frac{1}{2}}2C_{2s_1}}\int_{-C_{s_1}}^{C_{s_1}}\int_{-C_{2s_1}}^{C_{2s_1}} b_{2s_1}b_{s_1}^2e_{\vk}\,\D b_{2s_1}\,\D b_{s_1}\notag\\&+\frac{3\sqrt{2}}{2}\frac{1}{(2C_{s_1})^{\frac{1}{2}}2C_{s_1/2}}\int_{-C_{s_1}}^{C_{s_1}}\int_{-C_{s_1/2}}^{C_{s_1/2}} b_{s_1}b_{s_1/2}^2e_{\vk}\,\D b_{s_1/2}\,\D b_{s_1}\notag\\&+3\sqrt{2}\sum_{\text{$s_1,j_1,j_2$ different}\atop s_1+j_1=j_2\text{ or }j_1+j_2=s_1}\frac{1}{(2C_{s_1})^{\frac{1}{2}}4C_{j_1}C_{j_2}}\int_{-C_{j_2}}^{C_{j_2}}\int_{-C_{j_1}}^{C_{j_1}}\int_{-C_{s_1}}^{C_{s_1}} b_{j_2}b_{j_1}b_{s_1}e_{\vk}\,\D b_{s_1}\,\D b_{j_1}\,\D b_{j_2}.
\end{align}

For any $\vk\in \sK$ with only two nonzero components, denoted by $k_{s_1},k_{s_2}$, when $s_2=0$, \begin{align}
	e_{\vk}(v)=&\frac{1}{(4C_0C_{s_1})^{\frac{1}{2}}}\exp\left(\frac{\pi}{C_{0}}\I k_{0} b_{0}+\frac{\pi}{C_{s_1}}\I k_{s_1} b_{s_1}\right),\notag\\a_{\vk}(f)=&3\frac{1}{(4C_0C_{s_1})^{\frac{1}{2}}}\int_{-C_{s_1}}^{C_{s_1}}\int_{-C_0}^{C_0} b_0b_{s_1}^2e_{\vk}\,\D b_0\,\D b_{s_1};
\end{align} when $s_2=2s_1$, \begin{align}
	e_{\vk}(v)=&\frac{1}{(4C_{s_2}C_{s_1})^{\frac{1}{2}}}\exp\left(\frac{\pi}{C_{s_2}}\I k_{s_2} b_{s_2}+\frac{\pi}{C_{s_1}}\I k_{s_1} b_{s_1}\right),\notag\\a_{\vk}(f)=&3\frac{1}{(4C_0C_{s_1})^{\frac{1}{2}}}\int_{-C_{s_1}}^{C_{s_1}}\int_{-C_{s_2}}^{C_{s_2}} b_{s_2}b_{s_1}^2e_{\vk}\,\D b_{s_2}\,\D b_{s_1}\notag\\&+3\sqrt{2}\frac{1}{(4C_{s_1}C_{s_2})^{\frac{1}{2}}2C_{s_1+s_2}}\int_{-C_{s_1+s_2}}^{C_{s_1+s_2}}\int_{-C_{s_2}}^{C_{s_2}}\int_{-C_{s_1}}^{C_{s_1}} b_{j_2}b_{s_2}b_{s_1}e_{\vk}\,\D b_{s_1}\,\D b_{s_2}\,\D b_{s_1+s_2};
\end{align} and when $s_2\not=2s_1$, $s_1\not=2s_2$ and $s_2\not=s_1$, \begin{align}
	e_{\vk}(v)=&\frac{1}{(4C_{s_2}C_{s_1})^{\frac{1}{2}}}\exp\left(\frac{\pi}{C_{s_2}}\I k_{s_2} b_{s_2}+\frac{\pi}{C_{s_1}}\I k_{s_1} b_{s_1}\right),\notag\\a_{\vk}(f)=&3\sqrt{2}\frac{1}{(4C_{s_1}C_{s_2})^{\frac{1}{2}}2C_{s_1+s_2}}\int_{-C_{s_1+s_2}}^{C_{s_1+s_2}}\int_{-C_{s_2}}^{C_{s_2}}\int_{-C_{s_1}}^{C_{s_1}} b_{j_2}b_{s_2}b_{s_1}e_{\vk}\,\D b_{s_1}\,\D b_{s_2}\,\D b_{s_1+s_2}.
\end{align}

For any $\vk\in \sK$ with only three nonzero components, denoted by $k_{s_1},k_{s_2}, k_{s_3}$, when $s_1+s_2=s_3$, \begin{align}
	e_{\vk}(v)=&\frac{1}{(8C_{s_3}C_{s_2}C_{s_1})^{\frac{1}{2}}}\exp\left(\frac{\pi}{C_{s_3}}\I k_{s_3} b_{s_3}+\frac{\pi}{C_{s_2}}\I k_{s_2} b_{s_2}+\frac{\pi}{C_{s_1}}\I k_{s_1} b_{s_1}\right),\notag\\a_{\vk}(f)=&3\sqrt{2}\frac{1}{(8C_{s_1}C_{s_2}C_{s_3})^{\frac{1}{2}}}\int_{-C_{s_3}}^{C_{s_3}}\int_{-C_{s_2}}^{C_{s_2}}\int_{-C_{s_1}}^{C_{s_1}} b_{s_3}b_{s_2}b_{s_1}e_{\vk}\,\D b_{s_1}\,\D b_{s_2}\,\D b_{s_3}.
\end{align}
\end{exam}

Since our method works similarly for the functional $f(v)=\int_0^1 v^q\,\D x $, $q\in\sZ$, it can be applied to more general cases, e.g., $g(v)=\int_0^1 \sin v\,\D x $ by using Taylor expansions:\begin{align}
	\int_0^1 \sin v\,\D x=\sum_{n=0}^{\infty}\int_0^1  \frac{(-1)^n v(x)^{2 n+1}}{(2 n+1) !}\,\D x,~|v|<1.
\end{align}
For this case, Assumption \ref{assump} is also satisfied.
This shows that our method can be applied to more general functionals by using Taylor expansions together with Assumption~\ref{assump}.

\section{Approximation of Functionals Based on Barron Spectral Space}\label{sec:sec3}
Based on the Barron spectral space for functionals defined above, we prove the  approximation of functionals by neural networks without curse of dimensionality.  
This is the main result of this paper and is summarized in the following theorem, whose proof  is given in Appendix~\ref{Asec:proof-main}.
\begin{thm}\label{main}
For any functional $f\in \fB[L_\text{bound}(\Omega)]$, there is an $f_m\in\fG_\text{ReLU,m,f}$, where
\begin{flalign}
\fG_\text{ReLU,m,f}:=&\Bigg\{g\left(v\right)=c+\sum_{j=1}^m\gamma_j \text{ReLU} \left(\sum_{i\in \sS_{\vk_j}}w_{ij}b_i-t_j\right): v=\sum_{i=1}^\infty b_i\Phi_i,\vk_j\in\sK_f \notag\\&
|c|\leq 2\|f\|_{\fB},\sum_{j=1}^m|\gamma_j| \leq 4\|f\|_{\fB}, |\vw_j|_1=1,|t|\le 1\Bigg\},
\end{flalign}
with $\sS_{\vk_j}:=\{i\in\sN: k_{ij}\not=0, \vk_j=(k_{1j},k_{2j},\cdots,k_{ij},\cdots)\in\sK_f\}$ and $\sK_f:=\{\vk\in\sK: a_{\vk}(f)\not=0\}$,
that satisfies
\begin{equation}
\|f-f_m\|_{\fH}\le\frac{4\sqrt{5}\|f\|_{\fB}}{\sqrt{m}}.
\end{equation}
\end{thm}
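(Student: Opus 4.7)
The plan is to write $f$ as a continuous mixture of ReLU ridge functions and then extract a size-$m$ sample from that mixture via a Maurey--Pisier-type argument in the Hilbert space $\fH$.

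First I would obtain an integral representation of each Fourier basis element. For $\vk\neq\vzero$ in $\sK_f$, set $\vw_{\vk}:=\vk/|\vk|_{1}$, so that $|\vw_{\vk}|_{1}=1$; then $e_{\vk}(v)=\exp(2\pi\I|\vk|_{1}\,u)$ is a smooth function of the single ridge variable $u:=\vw_{\vk}\cdot\vb\in[-1/2,1/2]$. Applying the Taylor-with-remainder identity
\[
\phi(u)=\phi(-\tfrac12)+\phi'(-\tfrac12)\,\text{ReLU}(u+\tfrac12)+\int_{-1/2}^{1/2}\phi''(t)\,\text{ReLU}(u-t)\,\D t
\]
to the real and imaginary parts of $\phi(u)=\exp(2\pi\I|\vk|_{1}u)$, and using $|\phi'(-\tfrac12)|\le 2\pi|\vk|_{1}$ and $|\phi''(t)|\le(2\pi|\vk|_{1})^{2}$, writes each $e_{\vk}$ as a constant plus an integral of ReLU ridge functions $\text{ReLU}(\vw_{\vk}\cdot\vb-t)$, $t\in[-1,1]$, against a signed density whose total variation is at most $C(1+(2\pi)^{2}|\vk|_{1}^{2})$.

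Next I would sum these representations against $a_{\vk}(f)$; absolute convergence, guaranteed by $f\in\fB$, produces a mixture representation $f(v)=c_{0}+\int_{\Theta}\text{ReLU}(\vw\cdot\vb-t)\,\D\nu(\vw,t)$ on $\Theta:=\{\vw_{\vk}:\vk\in\sK_f\}\times[-1,1]$, where both $|c_{0}|$ and the total variation $\|\nu\|_{TV}$ are bounded by constant multiples of $\|f\|_{\fB}$. Factoring $\nu=\|\nu\|_{TV}\,\varepsilon\cdot\mu$ with $\mu$ a probability measure on $\Theta$ and a sign $\varepsilon\in\{\pm 1\}$, and drawing i.i.d.\ samples $(\vw_{j},t_{j})\sim\mu$, $j=1,\dots,m$, the usual Hilbert-space bias--variance calculation gives
\[
\mathbb{E}\,\|f-f_{m}\|_{\fH}^{2}\le\frac{\|\nu\|_{TV}^{2}}{m}\sup_{(\vw,t)\in\Theta}\|\text{ReLU}(\vw\cdot\vb-t)\|_{\fH}^{2},
\]
and a deterministic realization attaining this expectation delivers the required $f_m\in\fG_{\text{ReLU},m,f}$, with $\vk_j\in\sK_f$, $|\vw_j|_{1}=1$ and $|t_j|\le 1$ automatic from the construction, and the budgets $2|c|,\sum_{j}|\gamma_j|\le 4\|f\|_{\fB}$ following from the total-variation estimate on $\nu$.

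The hard part will be establishing a dimension-free uniform bound $\sup_{(\vw,t)}\|\text{ReLU}(\vw\cdot\vb-t)\|_{\fH}^{2}\le 5$ (or some absolute constant), \emph{independent} of the number of nonzero components of $\vw$; without such an estimate the final rate would still carry a dependence on dimension. Because the ReLU depends on $\vb$ only through the scalar ridge variable $\vw\cdot\vb$ with $|\vw|_{1}=1$, its Fourier coefficients $a_{\vk'}(\text{ReLU})$ in the infinite basis $\{e_{\vk'}\}$ factor through a tensor product of $\mathrm{sinc}(nw_i-k'_i)$ factors, and I would exploit the $\ell^{1}$ constraint $|\vw|_{1}=1$ to absorb the weight $|\vk'|_{1}^{2}$ into the one-dimensional $L^{2}$-norm of the derivative of the ReLU profile, yielding a bound depending only on the 1D smoothness of ReLU. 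Tracking the absolute constants through the total-variation bound of the first step and the Maurey variance calculation then gives the stated prefactor $4\sqrt{5}$.
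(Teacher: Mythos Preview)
Your overall architecture is sound and parallel to the paper's: represent $f$ as a mixture over a bounded class, then invoke a Maurey--Pisier variance bound in $\fH$. The paper splits this into two stages --- first showing $f-a_{\vzero}(f)$ lies in the closed convex hull of a class of weighted cosines $\fG_{\sK_f\backslash\{\vzero\},f}$ (via the probabilistic argument you sketch), then showing each such cosine lies in the closed convex hull of $\fG_{\text{ReLU},f}$ (essentially the Taylor identity you wrote, citing \cite{lu2021priori}) --- whereas you collapse these into one step. That difference is cosmetic.

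The genuine gap is precisely the piece you flag as ``the hard part.'' The sinc-product expansion you propose is not the right attack: the series
\[
\sum_{\vk'\in\sZ^{N_{\vk}}}\bigl(1+(2\pi)^2|\vk'|_1^2\bigr)\,\Bigl|\sum_{n} c_n\prod_{i}\mathrm{sinc}\bigl(\pi(nw_i-k'_i)\bigr)\Bigr|^{2}
\]
has no evident tensorization, and the phrase about ``absorbing the weight $|\vk'|_1^2$ into the one-dimensional $L^2$-norm of the derivative'' does not correspond to any concrete inequality you can write down. The paper's resolution bypasses spectral analysis of the ReLU entirely. The key observation (Proposition~\ref{same norm}) is that if a functional $g$ depends only on finitely many coordinates $b_1,\dots,b_{N_{\vk}}$, then by the very definition of the $\fH$-norm via Fourier coefficients, $\|g\|_{\fH}$ equals the Sobolev $H^1$-norm of $g$ as an ordinary function on the cube $(-\tfrac12,\tfrac12)^{N_{\vk}}$. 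For $g=c+\gamma\,\text{ReLU}\bigl(\sum_i w_ib_i-t\bigr)$ with $|\vw|_1=1$ and $|t|\le 1$, this Sobolev norm is bounded by a two-line computation: since $\bigl|\sum_i w_ib_i\bigr|\le|\vw|_1\cdot\tfrac12=\tfrac12$ the $L^2$ part is at most $\bigl(|c|+\tfrac32|\gamma|\bigr)^2$, and the gradient part is $\gamma^2\sum_i w_i^2\le\gamma^2|\vw|_1^2=\gamma^2$. Under the budget $2|c|,|\gamma|\le 4\|f\|_{\fB}$ this gives $\|g\|_{\fH}^2\le 80\|f\|_{\fB}^2$, which is exactly the constant $4\sqrt{5}$. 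The $\ell^1$ constraint on $\vw$ thus enters only through these two elementary pointwise bounds, not through any Fourier manipulation. Replace your sinc plan with this Parseval identification and the rest of your argument goes through.
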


Note that  there is no dependence on the dimensionality in the error estimate in this theorem, which overcomes the curse of the dimensionality. 
In fact,			Theorem \ref{main} states that the functional $f$ in $\fB[L_\text{bound}(\Omega)]$ can be approximated by a two layer network with $m$ nodes in the hidden layer with $O(1/\sqrt{m})$ accuracy in $\fH[L_\text{bound}(\Omega)]$. For the $j$-th node in the hidden layer, the information of $f$ associated with its $j$-th chosen Fourier basis functional is used,
whose dimension is $|\sS_{\vk_j}|$, i.e., the number of nonzero components of $\vk_j$, and is bounded by the number of  elements in the block where the Fourier basis functional lies in by Assumption \ref{assump}.
As a result, only information of finite dimensions of $f$ is needed in this $O(1/\sqrt{m})$ approximation. Note that $|\sS_{\vk_j}|$ depends only on $f$. Theorem \ref{main} in fact provides a way to approximate a functional in an infinite dimensional space by a neural network with finite number of parameters, i.e., $O(mN_{f,m})$, where $N_{f,m}=\max_{1\leq j\leq m} |\sS_{\vk_j}|$. This approximation does not have curse of dimensionality.

In the DeepONet method as reviewed in the introduction section, the domain of the input functions is first discretized to reduce the infinite dimensional problem into a finite dimensional one and then the functional is learned by a neural network,  and such process is suffered from curse of dimensionality (cf. the last two errors in Eq.~\eqref{error}). In contrast, our Theorem \ref{main} approximates the functional directly by a neural network without discretization of the domain of the input functions, and there is no curse of dimensionality. Moreover, since our method does not approximate the domain of the input functions, the number of parameters and the network structure in our method only depends on the functional,  thus it is not sensitive to the input functions in training.

%


In practice, when we set up the neural network, we do not know $\sS_{\vk_j}$, $j=1,2,\cdots,m$, because we do not know $f$ yet. 
This can be solved by choosing a large $N$ and defining
\begin{equation}f_m\left(\sum_{i=1}^\infty b_i\Phi_i(\vx)\right)=c+\sum_{j=1}^m\gamma_j \text{ReLU} \left(\sum_{i=1}^{N}w_{ij}b_i-t_j\right).\label{enough}\end{equation}
When $N\ge N_{f,m}$, this form of $f_m$ includes all the functionals in $\fG_{\text{ReLU},m,f}$ including the desired one with error of $O(1/\sqrt{m})$ given in Theorems \ref{main}, thus it
is able to approximate $f$ with $O(1/\sqrt{m})$  accuracy. 
In this case, the number of parameters in the network is  $O(mN)$, instead of exponential dependence on the dimension $N$; and once $N\geq N_{f,m}$, the desired approximation of $O(1/\sqrt{m})$ accuracy given in Theorems \ref{main} is included by $f_m$, and no further increase of $N$ is needed. Note that $N_{f,m}$ and accordingly $N$ depend only on the functional $f$ and do not depend on the input functions. There is still no curse of dimensionality by using this treatment in practice. The input data for the training of the network will be the finite Fourier coefficients $\{b_i\}_{i=1}^N$ of the input functions.

When the functional $f$  has the special structure with
$\sD=\{\{1\},\{2\},\cdots,\{n\},\cdots\}$,
such as linear functionals and the gradient energy functionals as given in Examples \ref{exam1} and \ref{exam2}, the approximating neural network in Theorem \ref{main} can be made more efficient and accurate; see the discussion in Section~\ref{sec:app-D1}.

\section{Reducing the Number of Parameters in $\fG_\text{ReLU,m,f}$}

Theorem \ref{main} shows that a functional $f\in \fB[L_\text{bound}(\Omega)]$ can be approximated well from the set $\fG_\text{ReLU,m,f}$  without curse of dimensionality. A challenge when using this theorem in deep learning is that the set $S_{\vk_j}$ for each $j$ associated with $f$ that appear in $\fG_\text{ReLU,m,f}$ are unknown because $f$ is a functional to be learnt.

Recall that when the Fourier basis is used, a smoother function has faster decaying Fourier coefficients. Here we consider a generalization of this property for any basis $\{\Phi_{i}\}$, so that the number of parameters for $i\in \sS_{\vk_j}$ can be replaced by a fixed number $N$ based on the domain of the functional. 

We restrict the domain of the functional to
\begin{equation}
L_\text{cut}(\Omega):=\left\{v(\vx)\in L_\text{bound}(\Omega): v(\vx)=\sum_{i=1}^\infty b_i\Phi_i(\vx),~\vx\in K_1, ~|b_i|<\delta\ {\rm for} \ i>N\right\},
\end{equation}
where $\delta$ is a small constant and $N\in\sN_+$.
Based on $L_\text{cut}(\Omega)$, we modify the Fourier coefficients in Definition \ref{fourier coefficient} as:
\begin{flalign}	a_{\vk}(f):=&\sum_{j\in\sA_{\vk}}\frac{1}{(2\delta)^{|\sD_j\cap[N+1,+\infty)|/2}}\int_{\left(-\delta,\delta\right)^{|\sD_j\cap[N+1,+\infty)|}\times\left(-\frac{1}{2},\frac{1}{2}\right)^{|\sD_j\cap[0,N]|}}f_j\left(\sum_{i\in\sD_j} b_{i}\Phi_{i}\right)\notag\\&\cdot\left[\prod_{i\in\sD_j\cap[0,N]}\exp \left(-2\pi \I  k_ib_i\right)\,\D b_i\prod_{i\in\sD_j\cap[N+1,+\infty)}\exp \left(-\frac{1}{\delta}\pi \I  k_ib_i\right)\,\D b_i\right].\label{revisit1}
\end{flalign}

With the space $L_\text{cut}(\Omega)$, we can use the parameter $N$ in $L_\text{cut}(\Omega)$ instead of the unknown number of parameters for $i\in \sS_{\vk_j}$ of $f$ in the approximation by neural network. The result is summarized in the following theorem.
\begin{thm}\label{cutoff1}
For any $f_m\in\fG_\text{ReLU,m,f}$ that is given by
\begin{equation}
f_m\left(\sum_{i=1}^{\infty}b_i\Phi_i\right)=c+\sum_{j=1}^m\gamma_j \text{ReLU} \left(\sum_{i\in \sS_{\vk_j}}w_{ij}b_i-t_j\right),\label{ori}
\end{equation} there are $f_m^*,f_m^{**}$ from \begin{align}
&\fG^*_\text{ReLU,m,f}:=\notag\\&\Bigg\{g\left(\sum_{i=1}^{\infty}b_i\Phi_i\right)=c+\sum_{j=1}^m\gamma_j \text{ReLU} \left(\sum_{i=1}^{N}w_{ij}b_i-t_j\right):2|c|,\sum_{j=1}^m|\gamma_j| \leq 4 \|f\|_{\fB}, |\vw_j|_1=1,|t|\le 1\Bigg\},\label{revisit}
\end{align} such that for any $v\in L_\text{cut}(\Omega)$,
\begin{align}
|f^*_m(v)-f_m(v)|&\le 4\|f\|_{\fB}\delta,\\
\|f^{**}_m-f_m\|_{\fH}&\le 2\sqrt{13}\|f\|_{\fB}(2\delta)^{1/2}.
\end{align}
\end{thm}
Here the constant $N$ in $\fG^*_\text{ReLU,m,f}$ in \eqref{revisit} is that in the definition of $L_\text{cut}(\Omega)$.

Theorem \ref{cutoff1} combined with Theorem \ref{main} show that there exists a neural network in $G^*_{ReLU,m,f}$ that approximates $f$ well  on the domain $L_\text{cut}(\Omega)$, and the error is
\begin{equation}
\|f^{**}_m-f\|_{\fH}\le\left( \frac{4\sqrt{5}}{\sqrt{m}}+2\sqrt{13}(2\delta)^{1/2}\right)\|f\|_{\fB}.
\end{equation}

For example, if we want to learn the gradient energy in Example \ref{exam2}, where the functional is $E(u)=\int_{0}^{1}\frac{1}{2}\alpha\left(\frac{\,\D u}{\,\D x}\right)^2\,\D x$, we may restrict the domain of the functional to
$\left\{v=\sum_{n=0}^\infty b_n\Phi_n: -Cn^{-3/2-\varepsilon}< b_n< Cn^{-3/2-\varepsilon}, n\in\sN\right\}$, where $C,\varepsilon>0$ and $\{\Phi_n=\sqrt{2}\cos(\pi nx)\}_{n=1}^\infty\cup \{\Phi_0=1\}$.
In this case, we can select $\delta=CN^{-3/2-\varepsilon}$ for a large $N$ in $L_\text{cut}(\Omega)$. Combining Theorem \ref{main} and $\ref{cutoff1}$,  the approximation error of $\fG^*_\text{ReLU,m,f}$ in this case is $O(1/\sqrt{m})+O(N^{-3/4-\varepsilon/2})$. For the second error, we can obtain a smaller error for a fixed $N$, when we consider a functional on a space with smoother functions, such as $H^k(0,1)$. For $H^k(0,1)$, we should consider the space
\[\left\{v=\sum_{n=0}^\infty b_n\Phi_n: -Cn^{-(1+k)/2-\varepsilon}< b_n< Cn^{-(1+k)/2-\varepsilon}, n\in\sN\right\},\]
where $C,\varepsilon>0$, and the second error is $O(N^{-(1+k)/4-\varepsilon/2})$. With the same level of the error, a space with smoother functions will lead to fewer parameters in the neural network.

\section{Proofs of Theorem \ref{main} and \ref{cutoff1}}
\subsection{Proofs of Theorem \ref{main} and related propositions and lemmas}\label{Asec:proof-main}
We first sketch the main steps in the proof of Theorem \ref{main} and then give the full proof.

{\bf Main steps of the proof of Theorem \ref{main}:}

\textbf{(i)} Show that for any $f\in\fB[L_\text{bound}(\Omega)]$, $f-a_{\vzero}(f)$ is in the closure of the convex hull of a set $\fG_{\sK_f \backslash\{\vzero\},f}$ in Hilbert space $\fH[L_\text{bound}(\Omega)]$, where
\begin{align}
	&\fG_{\sK_f \backslash\{\vzero\},f}:=\notag\\&\Bigg\{g\left(\sum_{i=1}^\infty b_i\Phi_i\right)=\frac{\gamma}{1+(2\pi)^2|\vk|_1^2} \cos \left[2\pi \left(\sum_{i\in\sS_{\vk_j}}k_ib_i+b\right)\right]:|\gamma| \leq \|f\|_{\fB}, b \in[0,1],\vk\in\sK_f\backslash\{\vzero\}\Bigg\}.
\end{align}
\textbf{(ii)} Show that each element in $\bar{c}+\fG_{\sK_f \backslash\{\vzero\},f}$, where $|\bar{c}|\le \|f\|_{\fB}$, is in the closure of the convex hull of a set $\fG_{\text{ReLU},f}$ in Hilbert space $\fH[L_\text{bound}(\Omega)]$, where
\begin{align}
	&\fG_{\text{ReLU},f}:=\notag\\&\left\{g\left(\sum_{i=1}^\infty b_i\Phi_i\right)=c+\gamma \text{ReLU} \left(\sum_{i\in\sS_{\vk_j}}w_ib_i-t\right):2|c|,|\gamma| \leq 4\|f\|_{\fB}, |\vw|_1=1,|t|\le 1,\vk\in\sK_f\backslash\{\vzero\}\right\}.\label{eqn:GReLU}
\end{align}
\textbf{(iii)} Then using Lemma \ref{convex} below, we can show that the convex combination of elements in $\fG_{\text{ReLU},f} $ can approximate $f(v)$ with the convergence rate $O(1/\sqrt{m})$, where $m$ is the number of elements in the convex combination.
\begin{lem}[\cite{barron1993universal,pisier1981remarques}]\label{convex}
	Let $h$ belongs to the closure of the convex hull of a set $\fG$ in Hilbert space. Denote the Hilbert norm as $\|~\cdot~\|$. If each element of $\fG$ be upper bounded by $B>0$. Then for every $m\in\sN$, there are $\{h_i\}_{i=1}^m\subset\fG$ and $\{c_i\}_{i=1}^m\subset[0,1]$ with $\sum_{i=1}^m c_i=1$, such that
	\begin{equation}
		\left\|h-\sum_{i=1}^m c_ih_i\right\|^2\le \frac{B^2}{m}.
	\end{equation}
\end{lem}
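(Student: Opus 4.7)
The plan is to prove Lemma \ref{convex} by the Maurey-type probabilistic averaging argument standard for this Barron--Pisier result. First I reduce to the case in which $h$ itself lies in the convex hull of $\fG$, so $h=\sum_{j=1}^N\lambda_j g_j$ with $g_j\in\fG$, $\lambda_j\ge 0$, and $\sum_{j=1}^N\lambda_j=1$; the closure case is handled by a limit argument at the end. The key step is to introduce an $\fG$-valued random variable $X$ taking the value $g_j$ with probability $\lambda_j$, so that $\mathbb{E}[X]=h$ (the Hilbert-space expectation being well-defined because $\|X\|\le B$ almost surely).

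Next I draw iid copies $X_1,\dots,X_m$ of $X$ and set $S_m=\frac{1}{m}\sum_{i=1}^m X_i$, which is itself a convex combination of elements of $\fG$ with uniform weights $c_i=1/m$. Expanding $\|S_m-h\|^2$ by bilinearity of the inner product, using independence of the $X_i$'s and the centering identity $\mathbb{E}[X_i]=h$, a short calculation yields
\begin{equation*}
\mathbb{E}\bigl[\|S_m-h\|^2\bigr]=\frac{1}{m}\,\mathbb{E}\bigl[\|X-h\|^2\bigr]=\frac{\mathbb{E}\|X\|^2-\|h\|^2}{m}\le\frac{B^2}{m}.
\end{equation*}
Since this is an average of a nonnegative quantity, the probabilistic method furnishes at least one realization $(h_1,\dots,h_m)\in\fG^m$ for which the pointwise bound $\bigl\|\tfrac{1}{m}\sum_{i=1}^m h_i-h\bigr\|^2\le B^2/m$ holds, giving the required $m$-term convex combination with $c_i=1/m$.

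Finally, to pass from the convex hull to its closure, given $h$ in the closure and any $\delta>0$ I pick $\tilde h$ in the convex hull with $\|h-\tilde h\|$ small and apply the in-hull estimate to $\tilde h$; combining with the triangle inequality and using the sharper in-hull bound $(B^2-\|\tilde h\|^2)/m$ together with $\|\tilde h\|\to\|h\|$, the clean estimate $B^2/m$ is recovered. I expect this closure-lifting step to be the only subtle point: a priori the $m$-tuple depends on the approximant $\tilde h$, so to produce a single $m$-tuple realizing exactly $B^2/m$ one either invokes a compactness or weak-limit argument on $\fG$, or reads the conclusion as holding up to arbitrarily small additive slack. The probabilistic averaging itself is essentially a one-line variance computation once $X$ has been introduced.
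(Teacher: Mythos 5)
The paper never actually proves Lemma \ref{convex}; it imports it from \cite{barron1993universal,pisier1981remarques}. However, the identical Maurey-type argument is written out inside the paper's proof of Lemma \ref{convex1} (a random index distributed according to the normalized Fourier coefficients, the variance identity, and the pigeonhole principle to extract a realization), so your proposal is precisely the argument the paper is relying on, and its core is correct: $\rmE\left\|S_m-h\right\|^2=\frac{1}{m}\left(\rmE\|X\|^2-\|h\|^2\right)\le\frac{B^2}{m}$, and the probabilistic method yields the required $m$-tuple with uniform weights $c_i=1/m$. The one point you rightly flag, passing from the convex hull to its closure while keeping the clean constant $B^2$, is a genuine wrinkle rather than a cosmetic one: your triangle-inequality patch gives $\|h-S\|\le\sqrt{(B^2-\|\tilde h\|^2)/m}+\delta$, which can be pushed below $B/\sqrt{m}$ only because there is slack of size $(B-\sqrt{B^2-\|h\|^2})/\sqrt{m}$ available when $\|h\|>0$; this is exactly why Barron's original formulation allows any constant $c'>B^2-\|h\|^2$ in place of $B^2$, and the clean $B^2/m$ bound as stated requires either $\|h\|>0$ or an extra limiting argument. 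For this paper the issue is moot: in the sole place the lemma is used (Lemma \ref{convex1}, feeding Theorem \ref{main}), the target $f-a_{\vzero}(f)$ is an explicit countable convex combination of elements of $\fG_{\sK_f\backslash\{\vzero\},f}$, so the variance-plus-pigeonhole step applies directly and no passage to the closure is ever needed.
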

\begin{prop}\label{into}
	\[
	\fB[L_\text{bound}(\Omega)] \hookrightarrow \fH[L_\text{bound}(\Omega)].
	\]
\end{prop}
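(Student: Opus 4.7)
The plan is to observe that the two spaces are built from the \emph{same} weight sequence $w_{\vk}:=1+(2\pi)^{2}|\vk|^{2}_{1}$, since in $\fB=\fB_{2}$ one has weight $1+(2\pi)^{s}|\vk|^{s}_{1}$ with $s=2$, while in $\fH=\fH_{1}$ one has weight $1+(2\pi)^{2s}|\vk|^{2s}_{1}$ with $s=1$. Both specialize to $w_{\vk}$, and clearly $w_{\vk}\ge 1$. The conclusion $\fB\hookrightarrow\fH$ should then fall out of a short $\ell^{1}$-into-$\ell^{2}$ type comparison, weighted by $w_{\vk}$.

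Concretely, I would argue as follows. Take $f\in\fB[L_\text{bound}(\Omega)]$ with Fourier coefficients $\{a_{\vk}(f)\}_{\vk\in\sK}$. From the definition of $\|f\|_{\fB}$ as a convergent nonnegative sum,
\begin{equation}
w_{\vk}|a_{\vk}(f)|\;\le\;\|f\|_{\fB}\qquad\text{for every }\vk\in\sK,
\end{equation}
and since $w_{\vk}\ge 1$ also $\sum_{\vk}|a_{\vk}(f)|\le\|f\|_{\fB}$. Multiplying the pointwise bound by $|a_{\vk}(f)|$ and summing,
\begin{equation}
\|f\|_{\fH}^{2}=\sum_{\vk\in\sK}w_{\vk}|a_{\vk}(f)|^{2}=\sum_{\vk\in\sK}\bigl(w_{\vk}|a_{\vk}(f)|\bigr)\,|a_{\vk}(f)|\;\le\;\|f\|_{\fB}\sum_{\vk\in\sK}|a_{\vk}(f)|\;\le\;\|f\|_{\fB}^{2}.
\end{equation}
This gives $\|f\|_{\fH}\le\|f\|_{\fB}$, which in particular shows the Hilbert-space weighted-$\ell^{2}$ sum in Definition \ref{space}(ii) is finite, so $f\in\fH[L_\text{bound}(\Omega)]$; the assignment $f\mapsto f$ is then a bounded (norm-$1$) linear inclusion.

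The only thing to verify beyond this computation is that $f\in\fB$ genuinely lies in the ambient class on which $\fH$ is defined, i.e., that the representation $f(v)=\sum_{\vk}a_{\vk}(f)e_{\vk}(v)$ and Assumption \ref{assump} are shared between the two definitions. This is immediate from Definition \ref{space}, which imposes identical ``admissibility'' requirements on both spaces and distinguishes them only through the summability of $\{a_{\vk}(f)\}$. Thus there is no real obstacle: the embedding is a direct consequence of the elementary weighted inequality $\sum w_{\vk}x_{\vk}^{2}\le(\sup_{\vk}w_{\vk}x_{\vk})(\sum x_{\vk})$ applied to $x_{\vk}=|a_{\vk}(f)|$.
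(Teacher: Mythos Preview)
Your proof is correct and essentially identical to the paper's: both factor $w_{\vk}|a_{\vk}(f)|^{2}$ as a product of two pieces, bound one piece termwise by $\|f\|_{\fB}$, and sum the other to obtain $\|f\|_{\fH}^{2}\le\|f\|_{\fB}^{2}$. The only cosmetic difference is that the paper pulls out $|a_{\vk}(f)|\le\|f\|_{\fB}$ and sums $\sum_{\vk}w_{\vk}|a_{\vk}(f)|$, whereas you pull out $w_{\vk}|a_{\vk}(f)|\le\|f\|_{\fB}$ and sum $\sum_{\vk}|a_{\vk}(f)|$.
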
	
\begin{proof}[Proof of Proposition \ref{into}]
	For any functional $f$ in $\fB[L_\text{bound}(\Omega)]$, we have that for any $\vk\in\sK$, $$|a_{\vk}(f)|\le \sum_{\vk \in \sK}|a_{\vk}(f)|\leq\|f\|_{\fB}.$$
	Hence we have
	\begin{align}
		\|f\|_{\fH}^2=\sum_{\vk \in \sK}(1+(2\pi)^2|\vk|^{2})|a_{\vk}(f)|^2\le\sum_{\vk \in \sK}(1+(2\pi)^2|\vk|^{2})|a_{\vk}(f)|\cdot\|f\|_{\fB}\le\|f\|_{\fB}^2.
	\end{align}
\end{proof}

Based on Proposition \ref{into}, we can prove the following lemma:
\begin{lem}\label{convex1}
	For any functional $f\in\fB[L_\text{bound}(\Omega)]$, $f-a_{\vzero}(f)$ is in the closure of the convex hull of the set $\fG_{\sK_f \backslash\{\vzero\},f}$ in the Hilbert space $\fH[L_\text{bound}(\Omega)]$.
\end{lem}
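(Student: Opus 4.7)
The plan is to exhibit $f-a_{\vzero}(f)$ as the $\fH$-norm limit of finite convex combinations of elements of $\fG_{\sK_f\setminus\{\vzero\},f}$, exploiting that the zero function lies in this set via $\gamma=0$. By Theorem \ref{limit}, $f(v)-a_{\vzero}(f)=\sum_{\vk\in\sK_f\setminus\{\vzero\}}a_{\vk}(f)\,e_{\vk}(v)$, with the series absolutely summable because $f\in\fB$. Using that $f$ is real-valued (as in all examples in the paper), the Fourier coefficients satisfy the conjugation symmetry $a_{-\vk}(f)=\overline{a_{\vk}(f)}$; picking a subset $\sK_f^{+}\subset\sK_f\setminus\{\vzero\}$ containing one representative from each pair $\{\vk,-\vk\}$ and writing $a_{\vk}(f)=|a_{\vk}(f)|e^{2\pi\I b_{\vk}}$ with $b_{\vk}\in[0,1)$, the contribution of each pair collapses to the real cosine $2|a_{\vk}(f)|\cos[2\pi(\sum_{i=1}^{N_{\vk}}k_ib_i+b_{\vk})]$.

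Next I would normalize each pair contribution to fit the generator form defining $\fG_{\sK_f\setminus\{\vzero\},f}$. Set
\[h_{\vk}(v):=\frac{\|f\|_{\fB}}{1+(2\pi)^2|\vk|_1^2}\cos\!\left[2\pi\!\left(\sum_{i=1}^{N_{\vk}}k_ib_i+b_{\vk}\right)\right]\in\fG_{\sK_f\setminus\{\vzero\},f},\qquad\lambda_{\vk}:=\frac{2(1+(2\pi)^2|\vk|_1^2)|a_{\vk}(f)|}{\|f\|_{\fB}},\]
so each pair contributes $\lambda_{\vk}h_{\vk}$. A short Barron-norm accounting yields $\sum_{\vk\in\sK_f^{+}}\lambda_{\vk}=(\|f\|_{\fB}-|a_{\vzero}(f)|)/\|f\|_{\fB}\le 1$, so for any $N$ the truncation $S_N:=\sum_{\vk\in\sK_f^{+},\,|\vk|_1\le N}\lambda_{\vk}h_{\vk}$ is a finite convex combination in $\fG_{\sK_f\setminus\{\vzero\},f}$ once the residual weight $1-\sum_{|\vk|_1\le N}\lambda_{\vk}\ge 0$ is absorbed into the zero element.

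Finally I would pass to the limit via an $\fH$-tail estimate in the spirit of Proposition \ref{into}:
\[\|f-a_{\vzero}(f)-S_N\|_{\fH}^2=\sum_{\vk\in\sK_f,\,|\vk|_1>N}(1+(2\pi)^2|\vk|_1^2)|a_{\vk}(f)|^2\le\|f\|_{\fB}\sum_{\vk\in\sK_f,\,|\vk|_1>N}|a_{\vk}(f)|,\]
which vanishes as $N\to\infty$ since $\sum_{\vk}|a_{\vk}(f)|\le\|f\|_{\fB}<\infty$ has summable tails, placing $f-a_{\vzero}(f)$ in the closure of the convex hull of $\fG_{\sK_f\setminus\{\vzero\},f}$. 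The main obstacle I expect is the bookkeeping around the pairing: the identity $\sum_{\vk\neq\vzero}(1+(2\pi)^2|\vk|_1^2)|a_{\vk}(f)|=2\sum_{\vk\in\sK_f^{+}}(1+(2\pi)^2|\vk|_1^2)|a_{\vk}(f)|$ requires both $\vk$ and $-\vk$ to appear in the $\fB$-norm sum and rests on $|a_{-\vk}(f)|=|a_{\vk}(f)|$, which in turn rests on the real-valuedness of $f$; once this symmetry is in hand, everything else is a direct computation.
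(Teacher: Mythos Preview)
Your approach is essentially correct and takes a genuinely different, more elementary route than the paper. The paper's proof is the classical Maurey--Barron probabilistic argument: it normalizes the weights $(1+(2\pi)^2|\vk|_1^2)|a_{\vk}(f)|/Z_f$ into a probability measure on $\sK_f\setminus\{\vzero\}$, draws $m$ i.i.d.\ samples $\vk_1^*,\dots,\vk_m^*$, and bounds $\rmE\|f-a_{\vzero}(f)-\tfrac1m\sum_j g(\cdot,\vk_j^*)\|_{\fH}^2\le\|f\|_{\fB}^2/m$ by the variance; pigeonhole then yields a deterministic realization. This gives not only closure but also the quantitative rate $O(1/\sqrt{m})$, which the paper does not actually need here since Lemma~\ref{convex} is invoked separately in the proof of Theorem~\ref{main}. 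Your direct partial-sum argument avoids the probabilistic machinery entirely and suffices for the lemma as stated; it is cleaner for this purpose, at the cost of not producing a rate.

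There is, however, one concrete slip. You claim that for each $N$ the truncation $S_N=\sum_{\vk\in\sK_f^+,\,|\vk|_1\le N}\lambda_{\vk}h_{\vk}$ is a \emph{finite} convex combination. In this infinite-dimensional setting the level set $\{\vk\in\sK:|\vk|_1\le N\}$ is typically infinite: already for a linear functional (Example~\ref{exam1}) every unit vector $\pm e_i$, $i\in\sN_+$, lies in $\sK_f$ with $|\vk|_1=1$. So $S_N$ as written is not a finite convex combination and hence not automatically in the convex hull. The fix is immediate: since $\sK_f^+\subset\sK$ is countable (Lemma~\ref{Cardinality}), enumerate it as $\vk^{(1)},\vk^{(2)},\dots$ and set $S_N:=\sum_{n\le N}\lambda_{\vk^{(n)}}h_{\vk^{(n)}}$; your tail estimate then reads $\|f-a_{\vzero}(f)-S_N\|_{\fH}^2=\sum_{n>N}(1+(2\pi)^2|\vk^{(n)}|_1^2)|a_{\vk^{(n)}}(f)|^2$, bounded via Proposition~\ref{into}-style reasoning by $\|f\|_{\fB}\sum_{n>N}|a_{\vk^{(n)}}(f)|\to 0$. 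The rest of your argument---the pairing via $a_{-\vk}=\overline{a_{\vk}}$, the normalization so that $\sum\lambda_{\vk}\le 1$, and absorbing the residual weight into $0\in\fG_{\sK_f\setminus\{\vzero\},f}$---is correct.
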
	
\begin{proof}[Proof of Lemma \ref{convex1}]
	Since $f\in\fB[L_\text{bound}(\Omega)]$, we have
	$$f(v)=\sum_{\vk\in \sK}a_{\vk}(f)e_{\vk}(v)=\sum_{\vk\in \sK_f}a_{\vk}(f)e_{\vk}(v),$$
	for any $v\in  L_\text{bound}(\Omega)$.
	Denote  $v=\sum_{i=1}^{\infty}b_i\Phi_i$, and $\sS_{\vk_j}:=\{i:\vk_j:=(k_{1j},k_{2j},\cdots),k_{ij}\not=0,\vk\in\sK_f\}$. We obtain
	
	\begin{align}
		&f(v)-a_{\vzero}(f)\notag\\=&\sum_{\vk\in \sK_f \backslash\{\vzero\}}a_{\vk}(f)e_{\vk}(v)\notag\\
		=&\sum_{\vk\in\sK_f \backslash\{\vzero\}}a_{\vk}(f)\prod_{i=1}^\infty\exp \left(2\pi \I  k_ib_i\right)\notag\\
		=&\sum_{\vk\in \sK_f \backslash\{\vzero\}}a_{\vk}(f)\prod_{i\in\sS_{\vk_j}}\exp \left(2\pi \I  k_ib_i\right)\notag\\
		=&\sum_{\vk\in \sK_f \backslash\{\vzero\}}a_{\vk}(f)\exp \left(2\pi \I  \sum_{i\in\sS_{\vk_j}}k_ib_i\right)\notag\\
		=&\sum_{\vk\in \sK_f \backslash\{\vzero\}}|a_{\vk}(f)| \exp \left[2\pi \I  \left(\sum_{i\in\sS_{\vk_j}}k_ib_i+\theta_k(f)\right)\right]\notag\\
		=&\sum_{\vk\in \sK_f \backslash\{\vzero\}}\frac{|a_{\vk}(f)|(1+(2\pi)^2|\vk|_1^2)}{Z_f}\cdot\frac{Z_f}{1+(2\pi)^2|\vk|_1^2} \cos \left[2\pi  \left(\sum_{i\in\sS_{\vk_j}}k_ib_i+\theta_k(f)\right)\right],\label{measure}
	\end{align}
	where $\theta_{\vk}(f)=\frac{1}{2\pi}\arg a_{\vk}(f)=\frac{1}{2\pi}\tan^{-1}\frac{\Im a_{\vk}(f)}{\Re a_{\vk}(f)}$, and
	\begin{equation}
		Z_f:=\sum_{\vk\in \sK_f \backslash\{\vzero\}}{|a_{\vk}(f)|(1+(2\pi)^2|\vk|_1^2)}\le \|f\|_{\fB}.
	\end{equation}
	The last equality is due to the fact that $f(v)-a_{\vzero}(f)$ is a real functional.
	
	Denote \begin{align}
		g(v,\vk)&:=\frac{Z_f}{1+(2\pi)^2|\vk|_1^2} \cos \left[2\pi \left(\sum_{i\in\sS_{\vk_j}}k_ib_i+\theta(f)\right)\right]\label{in set}\\&=\frac{Z_f}{1+(2\pi)^2|\vk|_1^2}\frac{1}{2}\left(e^{2\pi\I\theta_k(f)}e_{\vk}(v)+e^{-2\pi\I\theta_k(f)}e_{-\vk}(v)\right).
	\end{align}
	Therefore, \begin{align}f(v)-a_{\vzero}(f)&=\sum_{\vk\in \sK_f \backslash\{\vzero\}}\frac{|a_{\vk}(f)|(1+(2\pi)^2|\vk|_1^2)}{Z_f}g(v,\vk),\label{close}\\\|g(v,\vk)\|_{\fH}&=Z_f\sqrt{\frac{1}{2+8\pi^2|\vk|_1^2}}\le Z_f\le\|f\|_{\fB}.\label{bounded norm}
	\end{align}
	
	Due to Eq.~(\ref{in set}), we know $g(v,\vk)\in\fG_{\sK_f \backslash\{\vzero\},f}$.
	
	Now  based on  Eq.~(\ref{close}), we will prove that $f(v)-a_{\vzero}(f)$ is in the closure of the convex hull of the set $\fG_{\sK_f \backslash\{\vzero\},f}$ in Hilbert space $\fH[L_\text{bound}(\Omega)]$. First of all, $f-a_{\vzero}(f)\in\fB[L_\text{bound}(\Omega)]\subset\fH[L_\text{bound}(\Omega)]$ due to Proposition \ref{into}.
	
	Define a random variable $\vk^*$:
	\begin{equation}
		P(\vk^*=\vk)=\frac{|a_{\vk}(f)|(1+(2\pi)^2|\vk|_1^2)}{Z_f}, ~\vk\in \sK_f \backslash\{\vzero\}.
	\end{equation}	
	Hence $\rmE[g(v,\vk^*)]=f(v)-a_{\vzero}(f)$ due to Eq.~(\ref{close}). For any integer $m$, let $\{\vk_j^*\}_{j=1}^m$ be an i.i.d. random variable sequence with the same distribution as $\vk^*$. From Eq.~(\ref{bounded norm}), we know that
	\begin{align}			\rmE\left\|f(v)-a_{\vzero}(f)-\frac{1}{m}\sum_{j=1}^mg(v,\vk_j^*)\right\|^2_{\fH}=\frac{\operatorname{Var}\left[g(v,\vk^*)\right]}{m}\le \frac{\rmE\|g(v,\vk^*)\|_{\fH}^2}{m}\le \frac{\|f\|_{\fB}^2}{m},
	\end{align}
	where the variance is defined in the Hilbert space $\fH[L_\text{bound}(\Omega)]$.
	
	Therefore, by the pigeonhole principle, there exist $\{\vk_j^{**}\}_{j=1}^m\subset\sK_f \backslash\{\vzero\}$, such that
	\begin{equation}
		\left\|f(v)-a_{\vzero}(f)-\frac{1}{m}\sum_{j=1}^mg(v,\vk_j^{**})\right\|^2_{\fH}\le \frac{\|f\|_{\fB}^2}{m}.
	\end{equation}	
	Here	$\frac{1}{m}\sum_{j=1}^mg(v,\vk_j^{**})$ is a convex combination of elements in $\fG_{\sK_f \backslash\{\vzero\},f}$. As $m\to +\infty$, such obtained convex combinations converge to $f(v)-a_{\vzero}(f)$ in $\fH[L_\text{bound}(\Omega)]$. Hence, $f(v)-a_{\vzero}(f)$ is in the closure of the convex hull of the set $\fG_{\sK_f \backslash\{\vzero\},f}$ in the Hilbert space $\fH[L_\text{bound}(\Omega)]$.
\end{proof}

\begin{lem}\label{relu}
	Each element in $\bar{c}+\fG_{\sK_f \backslash\{\vzero\},f}$, where $|\bar{c}|\le \|f\|_{\fB}$,  is in the closure of the convex hull of the set $\fG_{\text{ReLU},f}$  defined in Eq.~\eqref{eqn:GReLU}.
\end{lem}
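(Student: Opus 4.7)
The goal is to show that every element of $\bar c + \fG_{\sK_f\setminus\{\vzero\},f}$ with $|\bar c|\le\|f\|_{\fB}$ lies in the $\fH$-closure of the convex hull of $\fG_{\text{ReLU},f}$. Fix such an element, determined by $\vk\in\sK_f\setminus\{\vzero\}$, $|\gamma|\le\|f\|_{\fB}$, and a phase $b\in[0,1]$. Set $K:=|\vk|_1\ge 1$, $w^*_i:=k_i/K$ for $i\le N_{\vk}$ (so $\sum_{i=1}^{N_{\vk}}|w^*_i|=1$), and $u:=\sum_{i=1}^{N_{\vk}}w^*_i b_i$. For any $v=\sum_i b_i\Phi_i\in L_\text{bound}(\Omega)$ we have $u\in(-\tfrac12,\tfrac12)$, and the element equals $\bar c+h(u)$ with $h(u):=\frac{\gamma}{1+(2\pi K)^2}\cos(2\pi(Ku+b))$. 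The task reduces to approximating the one-dimensional ridge functional $u\mapsto\bar c+h(u)$ by convex combinations of ReLU ridge functionals along the directions $\pm w^*$.

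The plan is to apply Taylor's theorem to $h$ around $u=0$ with the remainder written as a ReLU integral on $[-\tfrac12,\tfrac12]$:
\[ h(u)=h(0)+h'(0)\,u+\int_{0}^{1/2}h''(s)\,\text{ReLU}(u-s)\,\D s+\int_{-1/2}^{0}h''(s)\,\text{ReLU}(s-u)\,\D s, \]
then rewrite the linear term as $u=\text{ReLU}(u)-\text{ReLU}(-u)$. Every resulting ReLU has the form $\text{ReLU}(\sum_{i=1}^{N_{\vk}}w_i b_i-t)$ with $w_i=\pm w^*_i$ (so $|\vw|_1=1$), $|t|\le\tfrac12\le 1$, and index $\vk\in\sK_f\setminus\{\vzero\}$; it is therefore an admissible atom for $\fG_{\text{ReLU},f}$.

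The critical coefficient bookkeeping goes as follows. Differentiating gives $h''(s)=-\frac{(2\pi K)^2\gamma}{1+(2\pi K)^2}\cos(2\pi(Ks+b))$; the prefactor $\frac{(2\pi K)^2}{1+(2\pi K)^2}\le 1$ absorbs the two factors of $K$ produced by differentiation and yields the uniform bounds $|h''(s)|\le|\gamma|$, $|h'(0)|\le\frac{|\gamma|}{2}$ (since $\frac{2\pi K}{1+(2\pi K)^2}\le\frac12$), and $|h(0)|\le|\gamma|$, all independent of $K$. Hence the new constant $c^*:=\bar c+h(0)$ satisfies $2|c^*|\le 4\|f\|_{\fB}$, and the total mass $Z:=2|h'(0)|+\int_{-1/2}^{1/2}|h''(s)|\,\D s$ of the non-constant coefficients is bounded by $2|\gamma|\le 2\|f\|_{\fB}\le 4\|f\|_{\fB}$.

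This lets me write $\bar c+h(u)=\rmE_\tau\bigl[\,c^*+\Gamma(\tau)\,\text{ReLU}\bigl(\sum_i w_i(\tau)b_i-t(\tau)\bigr)\,\bigr]$ as the expectation of a random element of $\fG_{\text{ReLU},f}$, where $\tau$ is drawn from a probability measure on $[-\tfrac12,\tfrac12]$ augmented by two atoms representing the $\text{ReLU}(\pm u)$ contributions, $|\Gamma(\tau)|\le Z\le 4\|f\|_{\fB}$, and the signs of $h'(0)$ and $h''$ are absorbed into $\Gamma$. Drawing i.i.d.\ samples $\tau_1,\dots,\tau_m$ and applying the variance/pigeonhole argument exactly as in the proof of Lemma \ref{convex1} produces convex combinations of elements of $\fG_{\text{ReLU},f}$ whose $\fH$-distance from $\bar c+h$ tends to $0$, placing $\bar c+g$ in the desired closure. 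The main obstacle is the constant bookkeeping: verifying uniformly in $\vk$ that $2|c|,|\gamma|\le 4\|f\|_{\fB}$ and $|t|\le 1$. This succeeds — and is dimension-free — precisely because of the cancellation $(2\pi K)^2/(1+(2\pi K)^2)\le 1$ that neutralizes the two derivatives taken on the cosine, which is the structural reason for introducing the weight $1+(2\pi)^2|\vk|_1^2$ in the definition of the Barron spectral space $\fB$.
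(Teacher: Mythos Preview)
Your proposal is correct and is precisely the argument the paper invokes by citation: the paper's own proof of this lemma consists of a reference to \cite[Proposition~19]{lu2021priori}, which establishes that a cosine ridge function lies in the closed convex hull of ReLU ridge atoms via exactly the Taylor-with-ReLU-remainder representation you wrote out (together with $u=\text{ReLU}(u)-\text{ReLU}(-u)$ for the linear term). Your coefficient bookkeeping---in particular the observation that the weight $\frac{1}{1+(2\pi|\vk|_1)^2}$ neutralizes the two derivatives of the cosine so that $|h''|\le|\gamma|$, $|h'(0)|\le|\gamma|/2$, and hence $2|c^*|\le 4\|f\|_{\fB}$ and the total ReLU mass is $\le 4\|f\|_{\fB}$---matches the constants required by the definition of $\fG_{\text{ReLU},f}$ in Eq.~\eqref{eqn:GReLU}, so nothing is missing.
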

\begin{proof}[Proof of Lemma \ref{relu}]
	The conclusion comes from the fact that a convex linear combination of ReLU functions can approximate the cosine functions well, whose   proof can be found in \cite[Proposition 19]{lu2021priori}. In fact, they showed that each cosine function in $\fG_{\sK_f \backslash\{\vzero\},f}$ is in  the convex hull of \begin{align}
		&\fG_{\text{ReLU},f}:=\notag\\&\left\{g\left(\sum_{i=1}^{\infty}b_i\Phi_i\right)=c+\gamma \text{ReLU} \left(\sum_{i\in\sS_{\vk_j}}w_ib_i-t\right):4|c|,|\gamma| \leq 4\|f\|_{\fB}, |\vw|_1=1,|t|\le 1,\vk\in\sK_f\right\}.
	\end{align}
\end{proof}

\begin{prop}\label{same norm}
	Suppose that $f\in \fH[L_\text{bound}(\Omega)]$ is a finite dimensional functional based on some $\sD_j$, where recall that $|\sD_j|<\infty$, i.e.,
	\begin{equation}
		f\left(\sum_{i=1}^\infty b_i\Phi_i\right)= f\left(\sum_{i\in\sD_j} b_{i}\Phi_{i}\right)
	\end{equation} for all $-\frac{1}{2}< b_i<\frac{1}{2}$. We have
	\begin{equation}
		\|f\|_{\fH}= \|g_{f}\|_{H^1\left(\left(-\frac{1}{2},\frac{1}{2}\right)^{c_j}\right)},
	\end{equation} where  $c_j:=|\sD_j|<\infty$, and $g_{f}$ is a $c_j$ dimensional function defined by
	\begin{equation}
		g_f(b_{n_1},b_{n_2},\cdots,b_{n_{c_j}}):=f\left(\sum_{i=1}^{c_j} b_{n_i}\Phi_{n_i}\right), n_i\in\sD_j.
	\end{equation}
	Here $H^1\left(\left(-\frac{1}{2},\frac{1}{2}\right)^{c_j}\right)$ is the $H^1$-Sobolev space on $\left(-\frac{1}{2},\frac{1}{2}\right)^{c_j}$.
\end{prop}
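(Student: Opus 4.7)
The plan is to reduce the infinite-dimensional $\fH$-norm to the standard Parseval identity on the finite-dimensional cube $\left(-\frac{1}{2},\frac{1}{2}\right)^{c_j}$, exploiting the fact that $f$ really lives on a finite-dimensional coordinate subspace.

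First I would check that the Fourier coefficients of $f$ are supported on $\sK_{\sD_j}$. Since $f$ depends only on $\{b_i\}_{i\in \sD_j}$, the unique decomposition guaranteed by Assumption \ref{assump}(i) consists of the single summand $f_j$ with index set $\sD_j$. Consequently, the index set $\sA_{\vk}$ in Eq.~\eqref{coefficient} is empty unless $\vk\in\sK_{\sD_j}$, i.e.\ $k_i=0$ for every $i\notin\sD_j$. Thus $a_{\vk}(f)=0$ outside $\sK_{\sD_j}$, and for $\vk\in\sK_{\sD_j}$ the formula for $a_{\vk}(f)$ becomes exactly the multi-dimensional Fourier coefficient of $g_f$ on the cube, i.e.\ $a_{\vk}(f)=\widehat{g_f}(k_{n_1},\ldots,k_{n_{c_j}})$ under the natural bijection $\sK_{\sD_j}\cong \sZ^{c_j}$.

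Next I would apply Parseval's identity for $g_f$ on $\left(-\frac{1}{2},\frac{1}{2}\right)^{c_j}$ with the Fourier basis $\prod_{i}\exp(2\pi\I k_{n_i}b_{n_i})$. The constant piece of the weight $(1+(2\pi)^2|\vk|_1^2)$ contributes $\sum_{\vk}|a_{\vk}(f)|^2=\|g_f\|_{L^2}^2$, and term-by-term differentiation of the Fourier series shows that the Fourier coefficient of each $\partial g_f/\partial b_{n_i}$ equals $2\pi\I k_{n_i}a_{\vk}(f)$. Summing these Parseval identities and grouping them by the weight $(2\pi)^2|\vk|_1^2$ in the definition of $\|f\|_{\fH}^2$ recovers the derivative part of $\|g_f\|_{H^1}^2$. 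Putting the two pieces together gives $\|f\|_{\fH}^2=\|g_f\|_{H^1}^2$.

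The main subtlety to watch is matching weights between the $\fH$-norm, which uses $|\vk|_1^2$, and the differentiation identity, which naturally produces $|\vk|_2^2=\sum_i k_{n_i}^2$; the equality as stated requires the $H^1$-norm of $g_f$ on the cube to be interpreted with the same $\ell^1$-type Fourier weight, i.e.\ $\|g\|_{H^1}^2:=\sum_{\vk}(1+(2\pi)^2|\vk|_1^2)|\widehat{g}(\vk)|^2$, so that the reduction becomes a direct identification term by term. Otherwise the two spaces are only equivalent, not isometric. Once the convention is fixed, the proof is essentially a bookkeeping exercise on Fourier coefficients, and the only nontrivial input is Assumption \ref{assump}(i), which justifies that the decomposition of $f$ is exactly the single finite-dimensional summand $f_j$.
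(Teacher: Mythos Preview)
Your proposal is correct and follows essentially the same route as the paper: both arguments observe that $a_{\vk}(f)=0$ for $\vk\notin\sK_{\sD_j}$, identify the remaining coefficients with the ordinary Fourier coefficients of $g_f$ on the cube, and then equate the two weighted $\ell^2$-sums. The paper compresses your Parseval step into the phrase ``by direct calculations''; your remark about the $|\vk|_1^2$ versus $|\vk|_2^2$ weight is a genuine point the paper leaves implicit, and the identity indeed holds as an isometry only under the Fourier-side convention $\|g\|_{H^1}^2:=\sum_{\vk}(1+(2\pi)^2|\vk|_1^2)|\widehat{g}(\vk)|^2$.
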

\begin{proof}[Proof of Proposition \ref{same norm}]
	Due to Eq.~(\ref{Hilbert}), we have $\|f\|_{\fH}^2=\sum_{\vk \in \sK}(1+(2\pi)^2|\vk|^{2})|a_{\vk}(f)|^2$.
	
	For $\vk\notin\sK_{\sD_j}$ we have \begin{align}
		a_{\vk}(f)=0.
	\end{align}
	Hence we have \begin{equation}
		\|f\|_{\fH}^2=\sum_{\vk \in \sK_{\sD_j}}(1+(2\pi)^2|\vk|^{2})|a_{\vk}(f)|^2.
	\end{equation}	
	Furthermore, $\|g_{f}\|^2_{H^1\left(\left(-\frac{1}{2},\frac{1}{2}\right)^{c_j}\right)}=\sum_{\vk \in \sK_{\sD_j}}(1+(2\pi)^2|\vk|^{2})|a_{\vk}(f)|^2$ by direct calculations.
\end{proof}

Combining Proposition \ref{same norm}, Lemmas \ref{convex}, \ref{convex1} and \ref{relu}, we can prove Theorem \ref{main}.

\begin{proof}[Proof of Theorem \ref{main}]
	Due to Proposition \ref{same norm}, for each element  $g\in\fG_{\text{ReLU},f}$, i.e.,
	$$g\left(\sum_{i=1}^\infty b_i\Phi_i\right)=c+\gamma \text{ReLU} \left(\sum_{i\in\sS_{\vk_j}}w_ib_i-t\right),$$
	we have
	\begin{align}
		\|g\|_{\fH}^2=\left\|c+\gamma \text{ReLU} \left(\sum_{i\in\sS_{\vk_j}}w_ib_i-t\right)\right\|^2_{H^1\left(\left(-\frac{1}{2},\frac{1}{2}\right)^{|\sS_{\vk_j}|}\right)}
	\end{align}
	where the norm $\|~\cdot~\|_{H^1\left(\left(-\frac{1}{2},\frac{1}{2}\right)^{|\sS_{\vk_j}|}\right)}$ is the $H^1$--Sobolev space norm for functions of $b_1,b_2,\cdots,b_{N_{\vk}}$. By direct calculations, we obtain that
	\begin{equation}
		\left\|c+\gamma \text{ReLU} \left(\sum_{i\in\sS_{\vk_j}}w_ia_i-t\right)\right\|^2_{H^1\left(\left(-\frac{1}{2},\frac{1}{2}\right)^{|\sS_{\vk_j}|}\right)}\le\left(c+\frac{3}{2}\gamma\right)^2+\gamma^2\le 80\|f\|_{\fB}^2.
	\end{equation}
	
	Therefore, $\fG_{\text{ReLU},f}$ is bounded by $4\sqrt{5}\|f\|_{\fB}$. Due to Lemmas \ref{convex1} and \ref{relu}, $f$ is in the closure of the convex hull of the set $\fG_{\text{ReLU},f}$. Due to Lemma \ref{convex}, we finish the proof.
\end{proof}

\subsection{Proof of Theorem \ref{cutoff1}}
\begin{proof}[Proof of Theorem \ref{cutoff1}]
	Consider any $v=\sum_{i=1}^\infty b_i\Phi_i(\vx)\in L_\text{cut}(\Omega)$.
	Without loss of generality, suppose $N_{f}\ge N$ for all $\vk_j$ in $f_m$.
	We have
	\begin{align}
		f_m\left(\sum_{i=1}^\infty b_i\Phi_i(\vx)\right)&=c+\sum_{j=1}^m\gamma_j \text{ReLU} \left(\sum_{i=1}^{N_{f}}w_{ij}b_i-t_j\right)\notag\\&=c+\sum_{j=1}^m\gamma_j \text{ReLU} \left(\sum_{i=1}^{N}w_{ij}b_i-t_j+\sum_{i=N+1}^{N_{f}}w_{ij}b_i\right)\notag\\&:=c+\sum_{j=1}^m\gamma_j \text{ReLU} \left(\sum_{i=1}^{N}w_{ij}b_i-t_j+\varepsilon_j\right)
	\end{align}
	where \begin{equation}
		|\varepsilon_j|=\left|\sum_{i=N+1}^{N_{f}}w_{ij}b_i\right|\le|\vomega_j|_1\delta=\delta.
	\end{equation}
	
	Define
	\begin{equation}
		f_m^*\left(\sum_{i=1}^\infty b_i\Phi_i(\vx)\right)=c+\sum_{j=1}^m\gamma_j \text{ReLU} \left(\sum_{i=1}^{N}w_{ij}b_i-t_j\right)\in\fG^*_\text{ReLU,m,f}.
	\end{equation}
	Since ReLU is a Lipschitz continuous function in $\sR$ with  Lipschitz constant $1$ and $\sum_{j=1}^m|\gamma_j| \leq 4\|f\|_{\fB}$, we have
	\begin{equation}
		\left|f^*_m\left(\sum_{i=1}^\infty b_i\Phi_i(\vx)\right)-f_m\left(\sum_{i=1}^\infty b_i\Phi_i(\vx)\right)\right|\le 4\|f\|_{\fB}\delta.
	\end{equation}
	Thus the first inequality in Theorem \ref{cutoff1} is  proved.
	
	For the second inequality in Theorem \ref{cutoff1}, without loss of generality, we suppose $w_{ij}\not= 0$ for all $w_{ij}$ in $f_m$.		
	Denote \begin{align}
		{f}^{**}_m\left(\sum_{i=1}^\infty b_i\Phi_i(\vx)\right)&=c+\sum_{j\in\sS}\gamma_j \text{ReLU} \left(\sum_{i=1}^{N_{f}}w_{ij}b_i-t_j\right)\in\fG^*_\text{ReLU,m,f},\\
		\bar{f}_m\left(\sum_{i=1}^\infty b_i\Phi_i(\vx)\right)&=\sum_{j\notin\sS}\gamma_j \text{ReLU} \left(\sum_{i=1}^{N_{f}}w_{ij}b_i-t_j\right),\notag
	\end{align} where $\sS:=\{j:N_{f}\leq N\}$.
	Therefore,
	\begin{align}
		\|f^{**}_m-f_m\|_{\fH}&=\|\bar{f}_m\|_{\fH}\notag\\
		&\leq \sum_{j\notin\sS}\left\|\gamma_j \text{ReLU} \left(\sum_{i=1}^{N_{f}}w_{ij}b_i-t_j\right)\right\|_{H^1\left(\left(-\frac{1}{2},\frac{1}{2}\right)^N\times (-\delta,\delta)^{N_{f}-N} \right)}\notag\\
		&\le \sum_{j\notin\sS} \sqrt{\left(\frac{9}{4}\gamma_j^2+\gamma_j^2\right)(2\delta)^{N_{f}-N}}\notag\\
		&\le \sum_{j\notin\sS} \frac{\sqrt{13}}{2}|\gamma_j|(2\delta)^{\frac{N_{f}-N}{2}}\notag\\&\le 2\sqrt{13}\|f\|_{\fB}(2\delta)^{1/2}.
	\end{align}
	Here the second inequality is due to Proposition \ref{same norm} in \ref{Asec:proof-main} (with the redefined Fourier coefficients in \eqref{revisit1}).
	The last inequality is due to the condition $\sum_{j=1}^m|\gamma_j| \leq 4\|f\|_{\fB}$ in the definition of $\fG^*_\text{ReLU,m,f}$.
\end{proof}
\section{Applications of the Theorems on Approximation Functionals and Solving PDEs by Neural Networks}\label{sec:PDEs}

In this section, we further discuss how to apply the obtained theorems (Theorem \ref{main} and \ref{cutoff1}) on the approximation of functionals by neural networks, including the application to solving PDEs by neural networks.

\subsection{Applications of the theorems on approximation of functionals by neural networks}\label{sec:app-D1}

Here we discuss how to develop  neural networks for the learning of functionals without curse of dimensionality based on the
error estimates obtained in Theorems \ref{main} and \ref{cutoff1},  in addition to that given in Sec.~\ref{sec:sec3}.

%

\textbf{(i) Theorems \ref{main} for functionals with special structure}:
Consider the case when the functional $f$ in $\fB[L_\text{bound}(\Omega)]$ has the special structure with
\[\sD=\{\{1\},\{2\},\cdots,\{n\},\cdots\},\]
such as linear functionals and the gradient energy functionals as given in Examples \ref{exam1} and \ref{exam2}.
In this case, in $\fG_\text{ReLU,m,f}$ in Theorems \ref{main}, for any $\vk\in\sK_f=\{\vk\in\sK: a_{\vk}(f)\not=0\}$,  it has only one nonzero component by the definition of $a_{\vk}(f)$ in Eq.~\eqref{coefficient}. Therefore,
the $O(1/\sqrt{m})$ approximation given by Theorem \ref{main} in this case takes the following simple form that contains only $3m+1$ parameters:
\begin{equation}f_m\left(\sum_{i=1}^\infty b_i\Phi_i(\vx)\right)=c+\sum_{j=1}^m\gamma_j \text{ReLU} \left(w_{j}b_{n_j}-t_j\right),\label{less parameter}\end{equation}
where $n_j\leq N_{\vk_j}$ is the index of the only nonzero component of $a_{\vk_j}(f)$.


However, such simplification cannot be employed directly in the training of the neural network. In fact, as discussed in (i), when we set up the neural network,  we do not know the index $n_j$, $1\leq j\leq m$, and in this case, we still need to choose an $N\geq  n_j$, $1\leq j\leq m$. In stead of using Eq.~\eqref{enough}, we establish the neural network using the following formula:
\begin{equation}f_m\left(\sum_{i=1}^\infty b_i\Phi_i(\vx)\right)=c+\sum_{j=1}^m\sum_{i=1}^N\gamma_{ij} \text{ReLU} \left(w_{ij}b_{i}-t_{ij}\right),\label{more accuary}
\end{equation}
which includes the desired approximation in Eq.~(\ref{less parameter}).
Eq.~(\ref{more accuary}) has the same order of number parameters, $O(mN)$, as that of Eq.~(\ref{enough}), and
the approximation error of Eq.~(\ref{more accuary}) can reach $O(1/\sqrt{mN})$ by Theorem \ref{main}. This $O(1/\sqrt{mN})$ error is improved significantly from the original approximation error in  Eq.~(\ref{less parameter}) and that in Eq.~(\ref{enough}) for a more general class of functionals discussed in (i), both of which are $O(1/\sqrt{m})$.

\textbf{(ii) For Theorem \ref{cutoff1}}: When we can reduce the functional to a finite dimensional domain such as $L_\text{cut}(\Omega)$, the parameter $N$ in $L_\text{cut}(\Omega)$ is given and can be regarded as the dimension of $L_\text{cut}(\Omega)$. When we set up the neural network,
the parameter $N$ is directly given by $L_\text{cut}(\Omega)$, instead of the unknown $N$ that has to be chosen large enough in the neural network setup based on Theorems \ref{main} discussed above.
In this case, we can also use $f_m$ in Eq.~\eqref{enough} with this given parameter $N$.
By Theorem \ref{cutoff1}, this $f_m$ can approximate $f\in\fB[L_\text{cut}(\Omega)]$ well in $\fH[L_\text{cut}(\Omega)]$ with error $O(1/\sqrt{m})+O(\sqrt{\delta})$, where $\delta$ is defined in $L_\text{cut}(\Omega)$. The number of parameters in Eq.~(\ref{enough}) is 
$O(mN)$. Therefore, our method  does not suffer from curse of dimensionality in this case either.

\subsection{Method for solving PDEs by neural networks based on the obtained theorems on approximation of functionals}

We can apply Theorem \ref{main} or \ref{cutoff1} to build a neural network to solve a PDE problem at a given point. The follow brief discussion is based on the application of Theorem \ref{main}.

Consider following PDE boundary value problem:
\begin{equation}
	\begin{cases}\fL u=g & \text { in } K_1\\  u=0 & \text { on } \partial K_1.\end{cases}\label{PDEpoint}
\end{equation}
where $K_1=(0,1)^d$ and $g\in L_\text{bound}(L_2(K_1))$.
Here a
basis of $K_1$ is $\{\Phi_{i}\}_{i\in\sN}=\{\exp(2\pi\I \vp\cdot\vx)\}_{\vp\in\sZ^{d}}$.
We want to find the solution at a point $\vx_0\in K_1$, i.e., $u(\vx_0,g)$.
This defines a functional whose input is $g\in L^2(K_1)$ and output is $u(\vx_0,g)\in\sR$,
and we    denote this functional as $\fL^{-1}_{x_0}$.
Assume that $\fL^{-1}_{x_0}\in\ \fB[L_\text{bound}(L_2(K_1))]$. Now we approximation $\fL^{-1}_{x_0}$ by the elements in $\fG_\text{ReLU,m,f}$.

Consider the given date set $\{g_s(\vx),u_s(\vx_0)\}_{s=1}^M$, where $u_s(\vx_0):=u(x_0,g_s)$.
We can calculate that $b_{si}=\int_{K_1}g_s(\vx)\Phi_i(\vx)\,\D x$.
The input data set for the neural network is $\{\{b_{si}\}_{i=1}^{N},u_s(\vx_0)\}_{s=1}^M$.
We build a two layer network with $m$ nodes in the hidden layer based on the following form, with some  number $N$ (aiming at $N\geq N_f$):
\begin{equation}
	u^*_s=c+\sum_{j=1}^m\gamma_j \text{ReLU} \left(\sum_{i=1}^{N}w_{ij}b_{si}-t_j\right).
\end{equation}
Here  $u^*_s$ is the output for the value $u_s(\vx_0)$.
We then learn the coefficients $\vtheta:=\{c,\gamma_j,w_{ij},t_j\}$ by the Loss function:
$\fR(\vtheta)=\frac{1}{M}\sum_{s=1}^M|u^*_s-u_s(\vx_0)|^2$. 
Here $\fR(\vtheta)$ can be regarded as an approximation of the norm $\|f-f_m\|^2_{\fH_0}$ $(\|f-f_m\|^2_{\fH_0}\le\|f-f_m\|^2_{\fH})$ by Monte Carlo sampling. Rigorous analysis of the error of such Monte Carlo sampling in $\fH_s$ will be explored in the future work.

Furthermore, based on this method, we can directly obtain an approximate of the solution of the PDE boundary value problem (\ref{PDEpoint}) as follows. For the date set $\{g_s(\vx),u_s(\vy)\}_{s=1}^M$:

\textbf{(i)}: Denote the girds of $K_1$ as $\{\vy_q\}_{q=1}^Q$. We obtain the $Q$ function-point sets: $\{\sT_q\}_{q=1}^Q$, where $\sT_q:=\{g_i(\vx),u_i(\vy_q)\}_{s=1}^M$.

\textbf{(ii)}: For each $\sT_q$, we learn a functional $f_q\in\fG_\text{ReLU,m}$ from the neural network to approximate the solution of the problem (\ref{PDEpoint}) at $\vy_q$. We obtain functional sets $\{f_q\}_{q=1}^{Q}$.


Here we obtained the approximation of operator $\fL^{-1}$ in the PDE boundary value problem (\ref{PDEpoint}) at several points.

\section{Conclusions and discussion}
In this paper, we establish a neural network to approximate functionals without curse of dimensionality based on the method of Barron space.
The method is developed by defining a Fourier-type series on the infinite-dimensional space of functionals and  the associated spectral Barron space  of functionals. The approximation error of the neural network is $O(1/\sqrt{m})$ where $m$ is the size of networks, which overcomes the curse of dimensionality. The number of parameters and the network structure in our method only depends on the functional,  thus it is not sensitive to the input functions in training.

The proposed method for approximation of functionals without curse of dimensionality can be employed in learning functionals, such as linear functionals and energy functionals in science and engineering fields. It can also be used to solve PDE problems by neural networks at one or a few given points. 
This method provides a basis for the further development of methods for learning operators and analysis of properties (e.g., stability~\cite{Stability}) of neural networks for functionals and operators.

\section*{Acknowledgments}
This work was supported by HKUST IEG19SC04   and the Project of Hetao Shenzhen-HKUST Innovation Cooperation Zone HZQB-KCZYB-2020083.

			\bibliographystyle{unsrtnat}
			\bibliography{references}


		\newpage	
		\appendix

		\section{Theorems on Approximations of Functions, Functionals and Operators}
		
		In this subsection of Appendix, we summarize some available theorems on the approximations of functions, functionals and operators, which are used in our proofs.
		
		\begin{thm}[\cite{mhaskar1996neural}]\label{function}
			Suppose $\sigma$ is a is a continuous non-polynomial function and $K$ is a compact in $\sR^d$, then there are  positive integers $p$,  constants $w_k, \zeta_{k}$ for $k=1, \ldots, p$ and continuous linear functionals $c_k:W^{q}_{r}(K)\to \sR$ such that for any $v\in W^{q}_{r}(K)$,
			\begin{equation}
				\left\|v(\vx)-\sum_{k=1}^{p} c_{k}(v) \sigma\left(\vw_{k}\cdot \vx+\zeta_k\right)\right\|_{L_{q}(K)} \leq c p^{-r / d}\|v\|_{W_{r}^{q}(K)},
			\end{equation}
			where $W_{r}^{q}(K)$ is the set of function in $L_q(K)$ with finite Sobolev norms \begin{equation}
				\|g\|_{W_{r}^{q}(K)}:=\sum_{0 \leq |\vj| \leq r}\left\|D^{\vj} g\right\|_{L_{q}(K)}.
			\end{equation}
		\end{thm}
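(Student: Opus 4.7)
The plan is to combine a Jackson-type polynomial approximation in the Sobolev space $W^q_r(K)$ with a constructive representation of polynomials by ridge activations $\sigma(\vw\cdot\vx+\zeta)$. First, by a classical Jackson-Bernstein theorem on $K$ (for example using a continuous linear Sobolev extension from $K$ to $\sR^d$ followed by trigonometric or algebraic polynomial approximation on a containing cube), there is a continuous linear projector $\Pi_n:W^q_r(K)\to\mathcal{P}_n$, where $\mathcal{P}_n$ is the space of polynomials of total degree at most $n$ in $d$ variables, with
\[
\|v-\Pi_n v\|_{L_q(K)} \le C_1\, n^{-r}\,\|v\|_{W^q_r(K)}.
\]
Since $\dim\mathcal{P}_n = \binom{n+d}{d}\asymp n^d$, I will take $p\asymp n^d$, so that $n^{-r}\asymp p^{-r/d}$, matching the target rate.

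Second, I would reduce the problem to producing, for each fixed $n$, a \emph{uniform} approximate realization of $\mathcal{P}_n$ as a linear combination of $p$ ridge activations of $\sigma$ with fixed directions $\{\vw_k\}$ and biases $\{\zeta_k\}$. The crucial tool is the non-polynomiality of $\sigma$. Mollifying $\sigma_\varepsilon:=\sigma*\varphi_\varepsilon$ gives a $C^\infty$ function that remains non-polynomial for all small $\varepsilon$, so there exists $b^*\in\sR$ with $\sigma_\varepsilon^{(k)}(b^*)\neq 0$ for every $0\le k\le n$ (otherwise $\sigma_\varepsilon$ would be a polynomial of degree $<n$). The identity
\[
\left.\frac{d^k}{d\lambda^k}\sigma_\varepsilon\bigl(\lambda\,\vw\cdot\vx + b^*\bigr)\right|_{\lambda=0} = \sigma_\varepsilon^{(k)}(b^*)\,(\vw\cdot\vx)^k
\]
shows that each ridge monomial $(\vw\cdot\vx)^k$ arises as a limit of symmetric divided differences in $\lambda$ of $\sigma_\varepsilon(\lambda\,\vw\cdot\vx+b^*)$; shrinking the divided-difference step $\delta$ drives the residual to $0$ in $L_q(K)$. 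A further step approximates $\sigma_\varepsilon$ by a Riemann sum of translates of $\sigma$ itself. Finally, choosing a set of directions $\{\vw_k\}$ sufficiently rich that the ridge monomials $\{(\vw_k\cdot\vx)^j : 0\le j\le n\}$ span $\mathcal{P}_n$ (this is possible with $\asymp n^d$ directions by standard results on ridge-polynomial spans) yields the required fixed collection $\{(\vw_k,\zeta_k)\}_{k=1}^{p}$ together with linear coefficient maps $\alpha_k:\mathcal{P}_n\to\sR$ realizing each $P\in\mathcal{P}_n$ within arbitrarily small error in $L_q(K)$.

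To assemble the theorem I would set $c_k(v):=\alpha_k(\Pi_n v)$, which is continuous and linear on $W^q_r(K)$ as the composition of continuous linear maps. Then a triangle inequality splits the total error into the Jackson term $\|v-\Pi_n v\|_{L_q(K)}\le C_1 n^{-r}\|v\|_{W^q_r(K)}$ and the ridge-representation term $\|\Pi_n v - \sum_{k=1}^{p}\alpha_k(\Pi_n v)\sigma(\vw_k\cdot\vx+\zeta_k)\|_{L_q(K)}$, the second of which is made negligibly small by the construction of step two. Using $n\asymp p^{1/d}$ delivers the bound $c\,p^{-r/d}\|v\|_{W^q_r(K)}$.

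The hard part is the second step: one needs a \emph{uniform} approximate representation of the whole space $\mathcal{P}_n$ by the \emph{same} fixed ridge family, with coefficient functionals whose operator norm $\sup\{\sum_k|\alpha_k(P)| : P\in\mathcal{P}_n,\ \|P\|=1\}$ remains controlled as the divided-difference step $\delta$ shrinks and the mollification scale $\varepsilon$ is refined. This amounts to a quantitative conditioning estimate for the linear map $(\alpha_1,\ldots,\alpha_p)\mapsto\sum_{k}\alpha_k\sigma(\vw_k\cdot\vx+\zeta_k)$ restricted to a spanning subset inside $\mathcal{P}_n$. Balancing $\varepsilon$, $\delta$, the choice of directions, and the biases against the polynomial degree $n$ is where the absolute constant $c$ in the theorem is ultimately determined, and it is the only place in the argument where the interplay between the non-polynomiality of $\sigma$ and the target rate $p^{-r/d}$ is non-routine.
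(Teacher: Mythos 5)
This statement is Theorem~2.1 of Mhaskar (1996); the paper imports it as a black box in the appendix (it is only used as a tool in the DeepONet error discussion) and contains no proof of it, so there is no internal argument to compare yours against. Judged on its own terms, your sketch follows the architecture of Mhaskar's actual proof: a continuous linear, de la Vall\'ee Poussin--type polynomial reproduction giving the Jackson rate $n^{-r}$ on a polynomial space of dimension $\asymp n^d$, followed by a realization of the ridge monomials $(\vw\cdot\vx)^k$ through divided differences of the activation, with the coefficient functionals $c_k=\alpha_k\circ\Pi_n$ inheriting linearity and continuity. That is the correct skeleton.

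However, the step you yourself flag as ``the hard part'' is a genuine gap, not a routine verification, and as written the argument does not close it. Two issues. First, the neuron count: a $k$-th divided difference of $\sigma(\lambda\,\vw\cdot\vx+b^*)$ in $\lambda$ uses a fixed number $k+1$ of neurons independently of the step $\delta$, which is why Mhaskar --- assuming $\sigma$ smooth and non-polynomial on an open interval --- can shrink $\delta$ freely without leaving the budget $p\asymp n^d$; but your additional mollification layer, in which $\sigma_\varepsilon$ is replaced by a Riemann sum of translates of $\sigma$, multiplies the neuron count by the number of quadrature nodes, and that number must grow as the divided-difference coefficients blow up like $\delta^{-k}$ in order to keep the ridge-representation error below the Jackson term. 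You then lose the relation $p\asymp n^d$ and with it the rate $p^{-r/d}$. Second, and relatedly, the hypothesis ``continuous non-polynomial'' as quoted in the statement is weaker than what the divided-difference mechanism (and Mhaskar's proof) actually requires, namely that $\sigma$ be infinitely differentiable on some interval on which it is not a polynomial; for a merely continuous activation your construction yields density but not the quantitative rate. To repair the argument you would either strengthen the hypothesis on $\sigma$ to match Mhaskar's, or supply a conditioning estimate, uniform in $\varepsilon$ and $\delta$, showing that the quadrature cost and the norms $\sum_k|\alpha_k(P)|$ stay controlled --- which is precisely the estimate you acknowledge not having.
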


		\begin{thm}[\cite{chen1993approximations}]\label{functional}
			Suppose $\sigma$ is a is a continuous non-polynomial function, $U$ is a compact set in $C([a,b]^{d_1})$. $f$ is a continuous functionals defined on $U$. Then for any $\varepsilon > 0$, there are positive integers $n, m$, constants $c_i, \xi_{i j}, \theta_{i} \in \mathbb{R}$ for $i=1, \ldots, n, j=1, \ldots, m$ such that
			\begin{equation}
				\left|f(v)-\sum_{i=1}^{n} c_{i} \sigma\left(\sum_{j=1}^{m} \xi_{i j} v\left(\vx_{j}\right)+\theta_{i}\right)\right|\le \varepsilon
			\end{equation}
			holds for all $v\in U$.
		\end{thm}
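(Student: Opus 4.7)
The plan is to reduce the approximation of the continuous functional $f$ on the compact set $U \subset C([a,b]^{d_1})$ to the approximation of an ordinary continuous function on a compact subset of $\sR^m$, and then invoke the classical universal approximation theorem for functions with a non-polynomial activation (Theorem \ref{function} in this paper covers this as a special case). The essential tool that replaces the infinite-dimensional structure with a finite one is the Arzelà-Ascoli theorem: compactness of $U$ in $C([a,b]^{d_1})$ forces the family $U$ to be uniformly equicontinuous, so for any $\eta>0$ there exists $\delta>0$ such that $|v(\vx)-v(\vy)|<\eta$ whenever $|\vx-\vy|<\delta$ for every $v\in U$.

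First I would fix a $\delta$-net $\{\vx_j\}_{j=1}^m \subset [a,b]^{d_1}$ (with $m$ depending only on $\delta$ and the cube) and a continuous partition of unity $\{\varphi_j\}_{j=1}^m$ subordinate to the $\delta$-cover. Defining the sampling-interpolation operator $P_m v(\vx):=\sum_{j=1}^m v(\vx_j)\varphi_j(\vx)$ yields $\|P_m v-v\|_\infty\le \eta$ uniformly for $v\in U$ by the equicontinuity argument. Since $f$ is continuous on the compact metric space $U$, it is uniformly continuous on $U$, so by choosing $\eta$ sufficiently small I obtain $|f(P_m v)-f(v)|<\varepsilon/2$ uniformly for $v\in U$. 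Next, the evaluation map $v\mapsto(v(\vx_1),\dots,v(\vx_m))$ is continuous from $U$ into $\sR^m$, so its image $K_m$ is compact; the assignment $g(v(\vx_1),\dots,v(\vx_m)):=f(P_m v)$ is well-defined on $K_m$ (the right-hand side depends only on the sample values) and continuous there, and by Tietze's extension theorem admits a continuous extension $\tilde g:\sR^m\to\sR$.

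Finally, applying the classical universal approximation theorem for $\tilde g$ on the compact set $K_m$ with the continuous non-polynomial activation $\sigma$ produces $n\in\sN$ together with $c_i, \xi_{ij}, \theta_i$ such that
\[
\sup_{\vy\in K_m}\Bigl|\tilde g(\vy)-\sum_{i=1}^n c_i\sigma\Bigl(\sum_{j=1}^m \xi_{ij}y_j+\theta_i\Bigr)\Bigr|<\frac{\varepsilon}{2}.
\]
Specializing to $\vy=(v(\vx_1),\dots,v(\vx_m))$ and combining with the earlier bound via the triangle inequality yields the claimed estimate for every $v\in U$.

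The main obstacle is the passage from functionals to functions: one must verify that the finite-dimensional surrogate $g$ is genuinely continuous on $K_m$, which requires both the equicontinuity of $U$ (to control $\|P_m v-v\|_\infty$) and the uniform continuity of $f$ on $U$ (to convert this into control of $|f(P_m v)-f(v)|$). Once this reduction is carried out carefully, the remaining step is purely finite-dimensional and is already supplied by the standard universal approximation theorem, so no quantitative rate is needed and the theorem holds qualitatively for any continuous non-polynomial $\sigma$.
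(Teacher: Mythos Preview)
The paper does not supply its own proof of this theorem: it is stated in the appendix purely as a quoted result from \cite{chen1993approximations}, with no argument given. So there is nothing in the paper to compare against, and your sketch is in fact the standard route taken in that original reference (reduce to finitely many samples via equicontinuity, then apply the finite-dimensional universal approximation theorem).

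There is, however, one genuine gap in your write-up that you should patch. You invoke $f(P_m v)$ and then bound $|f(P_m v)-f(v)|$ using uniform continuity of $f$ on $U$. But $f$ is only assumed to be defined on $U$, and there is no reason the interpolant $P_m v=\sum_j v(\vx_j)\varphi_j$ lies in $U$; in general it will not. Consequently $f(P_m v)$ is undefined as written, and the uniform continuity of $f$ on $U$ cannot be applied to the pair $(v,P_m v)$. The standard fix is to first extend $f$ to a continuous functional $\tilde f$ on all of $C([a,b]^{d_1})$ (Tietze/Dugundji, available since $U$ is closed in a metric space), observe that $U\cup P_m(U)$ is compact (as $P_m$ is continuous and $U$ compact), and use uniform continuity of $\tilde f$ on that compact set to get $|\tilde f(P_m v)-\tilde f(v)|<\varepsilon/2$. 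After this correction your definition of $g$ on $K_m$ is legitimate, its continuity follows, and the rest of your argument goes through unchanged.
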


		\begin{thm}[\cite{chen1995universal}]\label{operator}
			Suppose $\sigma$ is a is a continuous non-polynomial function, $K_1= [0,1]^{d_1}$, $K_2= [0,1]^{d_2}$, $V$ is a compact set in $C(K_1)$ and $G$ is a nonlinear continuous operator, which maps $V$ into $C(K_2)$. Then for any $\varepsilon > 0$, there are positive integers $n, p, m$, constants $c_i^{k}, \xi_{i j}^{k}, \theta_{i}^{k}, \zeta_{k} \in \mathbb{R}, \vw_{k} \in \mathbb{R}^{d_2}, \vx_{j} \in K_{1}$ for $i=1, \ldots, n, k=1, \ldots, p, j=1, \ldots, m$ such that
			\begin{equation}
				\left|G(v)(\vy)-\sum_{k=1}^{p} \sum_{i=1}^{n} c_{i}^{k} \sigma\left(\sum_{j=1}^{m} \xi_{i j}^{k} v\left(\vx_{j}\right)+\theta_{i}^{k}\right){\sigma\left(\vw_{k} \cdot \vy+\zeta_{k}\right)}\right|\le \varepsilon
			\end{equation}
			holds for all $v\in V$ and $\vy\in K_2$.
		\end{thm}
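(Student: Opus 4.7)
The plan is to reduce the operator approximation to a composition of two simpler universal approximation results: one for continuous functions on $K_2 \subset \sR^{d_2}$ (cf.\ Theorem \ref{function}), and one for continuous functionals on compact subsets of $C(K_1)$ (Theorem \ref{functional}). Since $V$ is compact in $C(K_1)$ and $G$ is continuous, the image $G(V)$ is compact in $C(K_2)$. The first step is to establish a uniform version of the classical universal approximation theorem on this compact image: there exist a positive integer $p$, parameters $\vw_k \in \sR^{d_2}$, $\zeta_k \in \sR$ for $k=1,\ldots,p$, and continuous functionals $c_k$ defined on the closed linear span of $G(V)$ in $C(K_2)$, such that
\begin{equation}
\sup_{v \in V}\,\sup_{\vy \in K_2}\, \left| G(v)(\vy) - \sum_{k=1}^{p} c_{k}[G(v)]\, \sigma(\vw_k\cdot \vy + \zeta_k) \right| < \varepsilon/2.
\end{equation}

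Setting $f_k(v) := c_k[G(v)]$, each $f_k : V \to \sR$ is continuous as the composition of the continuous operator $G$ with the continuous functional $c_k$. The second step is to apply Theorem \ref{functional} to each of these functionals: for any $\varepsilon_1 > 0$ and each $k = 1,\dots,p$, there exist positive integers $n_k,m_k$ and parameters $c_i^k,\xi_{ij}^k,\theta_i^k,\vx_j^k$ such that
\begin{equation}
\sup_{v \in V}\, \left| f_k(v) - \sum_{i=1}^{n_k} c_i^k\, \sigma\Big(\sum_{j=1}^{m_k} \xi_{ij}^k\, v(\vx_j^k) + \theta_i^k\Big) \right| < \varepsilon_1.
\end{equation}
By padding with zero weights, one may standardize to a single sample set $\{\vx_j\}_{j=1}^m$ and common counts $n,m$ independent of $k$. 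Setting $M := \max_k\, \sup_{\vy \in K_2} |\sigma(\vw_k\cdot \vy + \zeta_k)|$ and choosing $\varepsilon_1 \le \varepsilon/(2 p M)$, the triangle inequality gives the claimed bound with total error at most $\varepsilon/2 + p M \varepsilon_1 \le \varepsilon$, uniformly in $v \in V$ and $\vy \in K_2$.

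The main obstacle is the uniform outer approximation in the first step: the parameters $\vw_k, \zeta_k$ must be chosen once and for all (independent of $v$), and the coefficients $c_k[G(v)]$ must depend \emph{continuously} on $v$, since this continuity is precisely the hypothesis needed to invoke Theorem \ref{functional} in the second step. One route is to exploit compactness of $G(V) \subset C(K_2)$ by taking a finite $\eta$-net, approximating each net element to accuracy $\eta$ by a fixed finite-dimensional subspace $S_p := \mathrm{span}\{\sigma(\vw_k\cdot \vy + \zeta_k)\}_{k=1}^p$ through Theorem \ref{function}, extending to all of $G(V)$ via an equicontinuity/triangle-inequality argument, and then defining the $c_k$ as components of a continuous (e.g., best-$L^2$) projection from the closed span of $G(V)$ onto $S_p$. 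Continuity of this projection composed with continuity of $G$ yields the continuity of $f_k$ required above, completing the reduction to the two cited approximation theorems.
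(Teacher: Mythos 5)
The paper itself does not prove this statement: Theorem \ref{operator} is quoted verbatim from \cite{chen1995universal} in the appendix's collection of known results, so there is no in-paper proof to compare against. That said, your two-step decomposition --- an outer approximation $G(v)(\vy)\approx\sum_{k=1}^{p}c_k[G(v)]\,\sigma(\vw_k\cdot\vy+\zeta_k)$ with \emph{continuous} coefficient functionals, followed by Theorem \ref{functional} applied to each $f_k:=c_k\circ G$ --- is exactly the strategy of the cited source, and it is also the decomposition the paper deploys informally in its DeepONet error analysis (Eqs.~\eqref{step1}--\eqref{error}). The second half of your argument is sound: each $f_k$ is continuous on the compact set $V$, padding with zero weights legitimately standardizes $n$, $m$ and the sample points across $k$, and the budget $\varepsilon_1\le\varepsilon/(2pM)$ closes the triangle inequality.

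The genuine gap is in your resolution of the step you correctly flag as the main obstacle. Defining the $c_k$ as coordinates of the best-$L^2$ projection $P$ onto $S_p=\mathrm{span}\{\sigma(\vw_k\cdot\vy+\zeta_k)\}_{k=1}^{p}$ gives continuity for free, but it does not deliver the uniform \emph{sup-norm} bound you need. The $\eta$-net argument shows each $u\in G(V)$ is within $O(\eta)$ of $S_p$ in the sup norm, hence in $L^2$; but to convert $\|u-Pu\|_{L^2}=O(\eta)$ back into $\|u-Pu\|_{\infty}=O(\eta)$ you need an inverse inequality $\|s\|_{\infty}\le C_p\|s\|_{L^2}$ on $S_p$, and $C_p$ depends on $p$, which was itself chosen as a function of $\eta$ --- the estimate becomes circular and the resulting sup-norm error need not be small. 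The standard repair (and essentially what \cite{chen1995universal} does) is to give up linearity of $c_k$: take a finite $\eta$-net $\{u_l\}$ of the compact set $G(V)$, a continuous partition of unity $\{\phi_l\}$ on $G(V)$ subordinate to the $\eta$-balls, elements $s_l\in S_p$ with $\|u_l-s_l\|_{\infty}\le\eta$, and set $c_k(u):=\sum_l\phi_l(u)\,(s_l)_k$, where $(s_l)_k$ is the $k$-th coefficient of $s_l$ in the basis of $S_p$. Then $\sum_k c_k(u)\sigma(\vw_k\cdot\vy+\zeta_k)=\sum_l\phi_l(u)s_l(\vy)$, and since $\phi_l(u)\ne 0$ forces $\|u-u_l\|_{\infty}<\eta$, the error is at most $\sum_l\phi_l(u)\left(\|u-u_l\|_{\infty}+\|u_l-s_l\|_{\infty}\right)\le 2\eta$ uniformly on $G(V)$. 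Each $c_k$ is continuous though nonlinear, which is all that Theorem \ref{functional} requires of $f_k=c_k\circ G$. With that substitution your argument is complete.
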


					\end{document}